\definecolor{myurlcolor}{rgb}{0.1,0.1,0.8}
\definecolor{mylinkcolor}{rgb}{0.05,0.05,0.4}
\newcommand{\blank}{(\hspace*{1ex})}
\newcommand{\dashbk}{-}
\newcommand{\bref}[1]{(\ref{#1})}
\newcommand{\cat}[1]{\mathscr{#1}}
\newcommand{\fcat}[1]{\mathbf{#1}}
\newcommand{\ovln}[1]{\overline{#1}}
\newcommand{\such}{:}
\newcommand{\op}{\mathrm{op}}
\newcommand{\id}{\mathrm{id}}
\newcommand{\HOM}{\fcat{Hom}}
\newcommand{\R}{\mathbb{R}}
\newcommand{\C}{\mathbb{C}}
\newcommand{\Z}{\mathbb{Z}}
\newcommand{\demph}[1]{\textbf{#1}}
\newcommand{\iso}{\cong}
\newcommand{\nat}{\mathbb{N}}	
\newcommand{\of}{\mathbin{\circ}}
\newcommand{\sub}{\subseteq}
\newcommand{\cell}[4]{\put(#1,#2){\makebox(0,0)[#3]{#4}}}
\DeclareMathOperator{\colim}{colim}
\newcommand{\ot}{\leftarrow}
\newcommand{\toby}[1]{\stackrel{#1}{\longrightarrow}}
\newcommand{\otby}[1]{\stackrel{#1}{\longleftarrow}}
\newcommand{\ltoby}[1]{\xrightarrow{#1}}
\newcommand{\Vect}{\fcat{Vect}}
\newcommand{\ip}[2]{\langle #1, #2 \rangle}
\newcommand{\dee}{\,d}
\newcommand{\dx}{\dee x}
\newcommand{\incl}{\hookrightarrow}
\newcommand{\from}{\colon}
\newcommand{\hlf}{{\textstyle \frac{1}{2}}}
\newcommand{\F}{\mathbb{F}}
\newcommand{\Hilb}{\fcat{Hilb}}
\newcommand{\Ban}{\fcat{Ban}}
\newcommand{\NVS}{\fcat{NVS}}
\newcommand{\Seqcat}{\cat{D}}
\newcommand{\BVemb}{\cat{V}_{\textup{emb}}}
\newcommand{\BNemb}{\cat{N}_{\textup{emb}}}
\newcommand{\Bemb}{\cat{B}_{\textup{emb}}}
\newcommand{\BVpar}{\cat{V}}
\newcommand{\BNpar}{\cat{N}}
\newcommand{\Bpar}{\cat{B}}
\newcommand{\BVmap}{\psi}
\newcommand{\BNmap}{\phi}
\newcommand{\Measword}{Mea}
\newcommand{\Memb}{\fcat{\Measword}_{\textup{emb}}}
\newcommand{\Mpar}{\fcat{\Measword}}
\newcommand{\Mpres}{\fcat{\Measword}_{\textup{pres}}}
\newcommand{\Simp}{\mathscr{S}}
\newcommand{\simp}{S}
\newcommand{\dyfn}{E}
\newcommand{\AC}{AC}
\newcommand{\dsl}[1]{\mathbin{\boxplus_{#1}}}
\newcommand{\Kstyle}[1]{#1}
\newcommand{\KKstyle}[1]{\textbf{#1}}
\newcommand{\Kcomp}{\Kstyle{(I)}}
\newcommand{\KKcomp}{\KKstyle{(I)}}
\newcommand{\Kpres}{\Kstyle{(II)}}
\newcommand{\KKpres}{\KKstyle{(II)}}
\newcommand{\Knorm}{\Kstyle{(III)}}
\newcommand{\KKnorm}{\KKstyle{(III)}}
\newcommand{\Kineq}{\Kstyle{(IV)}}
\newcommand{\KKineq}{\KKstyle{(IV)}}
\newcommand{\KHilb}{\Kstyle{(IV$_\textrm{H}$)}}
\newcommand{\KKHilb}{\KKstyle{(IV$_\textrm{H}$)}}
\newtheorem{thm}{Theorem}[section]
\newtheorem{propn}[thm]{Proposition}
\newtheorem{lemma}[thm]{Lemma}
\newtheorem{cor}[thm]{Corollary}
\newtheorem*{thma}{Theorem~A}
\newtheorem*{thmb}{Theorem~B}
\newtheorem{remark}[thm]{Remark}
\newtheorem{remarks}[thm]{Remarks}
\theoremstyle{nonumberplain}
\newtheorem{proof}{Proof}
\newcommand{\theoremtobeproved}{}
\newtheorem{pfoftheorem}{Proof of \theoremtobeproved}
\title{A categorical derivation of Lebesgue integration}
\author{Tom Leinster%
\thanks{School of Mathematics, University of Edinburgh, James Clerk Maxwell Building, Peter Guthrie
Tait Road, Edinburgh EH9 3FD, Scotland. Email:  
  Tom.Leinster@ed.ac.uk. MSC2020: 18A40, 28C99, 46B99}}
\date{}
\begin{document}

\sloppy
\maketitle

\begin{abstract}
We identify simple universal properties that uniquely characterize the
Lebesgue $L^p$ spaces. There are two main theorems. The first states that
the Banach space $L^p[0, 1]$, equipped with a small amount of extra
structure, is initial as such.  The second states that the $L^p$ functor on
finite measure spaces, again with some extra structure, is also initial as
such. In both cases, the universal characterization of the
\emph{integrable} functions produces a unique characterization of
\emph{integration}. We use the universal properties to derive some of the
basic elements of integration theory. We also state universal properties
characterizing the sequence spaces $\ell^p$ and $c_0$, as well as the
functor $L^2$ taking values in Hilbert spaces. 
\end{abstract}

\section{Introduction}
\label{sec:intro}

Lebesgue integration is universally agreed to lie at the heart of analysis,
yet its fundamental nature contrasts with the long and perhaps
technical-seeming string of preliminaries that faces the student wishing to
learn the basic definitions.  For instance, a standard route to defining
Lebesgue integrability and integration for functions on $\R$ involves
(1)~defining null sets and almost everywhere convergence, (2)~defining step
functions, (3)~defining the set $\mathcal{L}^\text{inc}$ of functions that
are almost everywhere limits of some increasing sequence of step functions,
(4)~defining the set $\mathcal{L}^1$ of differences of elements of
$\mathcal{L}^\text{inc}$, (5)~defining integration for step functions, and
(6)~proving that the definition is consistent, before finally (7)~extending
the integral to $\mathcal{L}^1$ by linearity and continuity, and
(8)~passing from $\mathcal{L}^1$ to its quotient $L^1$. A different route
constructs $L^1$ as the completion of the normed space of continuous
functions; but for that, one must first develop the theory of integration
in the continuous case.

One might therefore wish for a description of Lebesgue integration that is
as simple and direct as the concept is fundamental. This paper presents two such
theorems. One uniquely characterizes the space $L^1[0, 1]$ and the operator
$\int_0^1$. The other does the same on an arbitrary measure space. Both
theorems entirely bypass steps (1)--(8). 

The theorems characterize spaces of integrable functions uniquely
up to isomorphism via a universal property. Specifically, they characterize
them as initial objects in certain categories.

Let us recall this notion. An object $Z$ of a category $\cat{C}$ is
\demph{initial} if for each object $C$ of $\cat{C}$, there is exactly one
map $Z \to C$ in $\cat{C}$. For example, the empty set is initial in the
category of sets, the trivial group is initial in the category of groups,
and $\Z$ is initial in the category of rings with identity. A slightly less
trivial example, closer in shape to those we will consider, is as
follows. There is a category whose objects are triples $(X, x, r)$
consisting of a set $X$, an element $x \in X$ and a function $r \from X \to
X$, and whose maps are functions preserving this structure. Its initial
object is $(\nat, 0, s)$, where $s$ is the successor function $n \mapsto n
+ 1$. Concretely, initiality means that for any set $X$, element $x$ and
function $r \from X \to X$, there is a unique sequence $(x_n)_{n \geq 0}$
in $X$ satisfying $x_0 = x$ and $x_{n + 1} = r(x_n)$.

Any two initial objects of a category are isomorphic. Hence, any theorem
stating that an object is initial in some category characterizes it
uniquely up to isomorphism. Our first theorem does this for the Banach
space $L^p[0, 1]$ equipped with a small amount of extra structure. Our
second theorem does it for the functor $L^p$ from finite measure spaces to
Banach spaces, again with some extra structure. (A \demph{finite measure
space} is a measure space with finite total measure; the underlying set
need not be finite.) Thus, both characterize $L^p$ uniquely up to isometric
isomorphism. And in both cases, the initiality of $L^1$ leads swiftly to a
unique characterization of the integration operator.

What is the point of these theorems?

First, they enable us to leapfrog all the customary preliminary
definitions, directly characterizing Lebesgue integrability and Lebesgue
integration. 

Second, any theorem stating that some object is initial establishes
uniqueness at two levels: the uniqueness up to isomorphism of the object
itself, and the literal uniqueness of the map to any other object.  Such a
theorem is effectively a large family of uniqueness theorems, one for each
object of the category.  Generally, for any important mathematical object,
one can ask: is it the only object enjoying the fundamental properties that
it enjoys? If not, why do we use it rather than something else? Or if so,
can we prove it?  For example, theorems of Alesker, Artstein-Avidan and
Milman answer these questions for the Fourier and Legendre transforms
\cite{AAAM,AAM}, and the present work answers them for the $L^p$ spaces and
Lebesgue integration. Theorem~\ref{thm:Ap}, for instance, characterizes the
space $L^p[0, 1]$ uniquely up to isometric isomorphism, and
Corollary~\ref{cor:elem-integral-intvl} characterizes the integration
operator on $[0, 1]$ uniquely. It is an entirely elementary
characterization of integration, using no categorical terminology. 

Third, the main theorems clarify conceptual dependencies. They show that,
granted some general categorical language, the concepts of Lebesgue
integrability and integration arise inevitably from little more than the
concept of Banach space. Perhaps surprisingly, they arise
\emph{automatically}, without invoking any prior concept of area under the
curve or antidifferentiation.

Fourth, the second main theorem, characterizing the $L^p$ functors,
provides a guide for the discovery of new theories of integration. A
researcher seeking the right notion of integration in some new context
(perhaps some new kind of function on a new kind of space) could follow the
same template: decide what kind of spaces the integrable functions should
form and what kind of functoriality should hold, formulate a universal
property analogous to the one below, and find the functor satisfying it.

Finally, such theorems are important simply because the Lebesgue theory is
important. There is no question of displacing the twin classical
perspectives on integration, area under the curve and the inverse of
differentiation. But the more perspectives the better, and here we provide
a new one: its characterization by a universal property.

The deep theorems on integration and measure are particular to that
subject, so there is no chance of deriving them using general categorical
methods that would apply equally to other categories.  The fact that (for
instance) $L^1[0, 1]$ is the initial object of a certain category does
determine it uniquely, so all of its properties could in principle be
derived from that characterization, but the deeper properties would
inevitably use very specific features of the category in which it is
initial. We prove no deep theorems of analysis here. However, our
results do establish that the fundamental objects of Lebesgue's theory can be
characterized in extremely primitive terms.

\begin{figure}
\setlength{\unitlength}{1mm}
\setlength{\fboxsep}{0pt}
\begin{picture}(120,18)(0,2)
\cell{60}{21}{t}{\includegraphics[width=60mm]{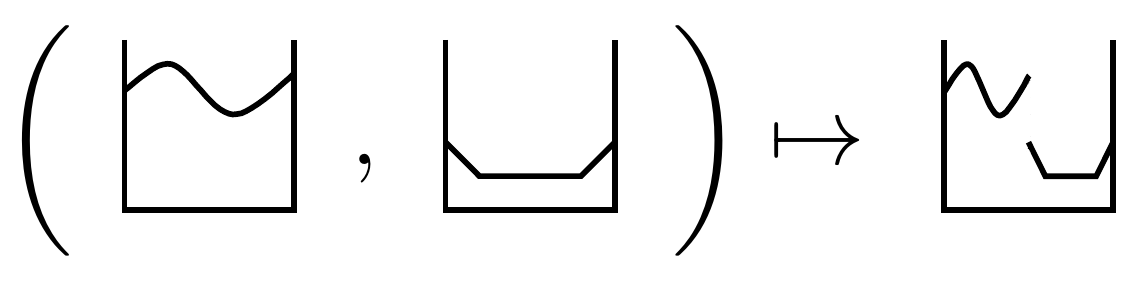}}
\cell{36.5}{9}{t}{$\scriptstyle{0}$}
\cell{45.5}{9}{t}{$\scriptstyle{1}$}
\cell{41}{4}{c}{$f$}
\cell{53.5}{9}{t}{$\scriptstyle{0}$}
\cell{62.5}{9}{t}{$\scriptstyle{1}$}
\cell{58}{4}{c}{$g$}
\cell{79.5}{9}{t}{$\scriptstyle{0}$}
\cell{88.5}{9}{t}{$\scriptstyle{1}$}
\cell{84}{4}{c}{$\gamma(f, g)$}
\end{picture}
\caption{The map $\gamma \from L^1[0, 1] \oplus L^1[0, 1] \to L^1[0,
1]$.}
\label{fig:juxt}
\end{figure}

We now summarize the results.

Section~\ref{sec:A} concerns integration on
$[0, 1]$. The most important case of the main theorem is as follows.
Let $\cat{A}$ be the category in which an object $(V, v, \delta)$ is a
Banach space $V$ together with a point $v$ in its closed unit ball and a
linear contraction $\delta \from V \oplus V \to V$ such that $\delta(v, v)
= v$. Here we give $V \oplus V$ the norm $\|(v_1, v_2)\| = \hlf(\|v_1\| +
\|v_2\|)$.

One object of $\cat{A}$ is $L^1[0, 1]$ together with the constant function
$1$ and the map $\gamma \from L^1[0, 1] \oplus L^1[0, 1] \to L^1[0, 1]$
that juxtaposes two functions and scales the domain by a factor of $1/2$
(Figure~\ref{fig:juxt}). We prove:

\begin{thma}
The initial object of $\cat{A}$ is $(L^1[0, 1], I, \gamma)$.
\end{thma}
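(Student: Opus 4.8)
The plan is to exploit the observation that the functions built from the constant function $I$ by repeatedly applying $\gamma$ (together with scalar multiplication) are precisely the dyadic step functions, and that these are dense in $L^1[0, 1]$. A morphism in $\cat{A}$ out of $(L^1[0, 1], I, \gamma)$ is a linear contraction $T$ preserving the distinguished point and the binary operation, so its values on dyadic step functions are completely forced: $T(I) = v$, and $T(\gamma(f, g)) = \delta(Tf, Tg)$. This single observation simultaneously dictates how to construct the unique morphism and shows that there can be at most one.

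First I would construct $T$ on the dense subspace $\bigcup_n D_n$, where $D_n \sub L^1[0, 1]$ denotes the step functions constant on each dyadic interval of length $2^{-n}$. Since $\gamma$ restricts to a linear isomorphism $D_n \oplus D_n \to D_{n+1}$, I can define $T$ on $D_n$ by induction: set $T(cI) = cv$ on $D_0$, and $T(\gamma(f, g)) = \delta(Tf, Tg)$ on $D_{n+1}$. The one genuine verification here is consistency across levels, i.e.\ that the definition on $D_{n+1}$ agrees with that on $D_n$ under the inclusion $D_n \incl D_{n+1}$; this is exactly where the hypothesis $\delta(v, v) = v$ (with linearity of $\delta$) is used, and a short induction settles it. Linearity of $T$ on each $D_n$ is then immediate.

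The crucial estimate is that $T$ is a contraction on $\bigcup_n D_n$. This follows by induction once one notes that the averaging norm on $V \oplus V$ exactly mirrors the effect of $\gamma$ on $L^1$ norms, namely $\|\gamma(f, g)\|_1 = \hlf(\|f\|_1 + \|g\|_1)$. Indeed, assuming $\|Tf\| \leq \|f\|_1$ and $\|Tg\| \leq \|g\|_1$, and using that $\delta$ is a contraction,
\[
\|T(\gamma(f, g))\| = \|\delta(Tf, Tg)\| \leq \hlf(\|Tf\| + \|Tg\|) \leq \hlf(\|f\|_1 + \|g\|_1) = \|\gamma(f, g)\|_1,
\]
with base case $\|T(cI)\| = |c|\,\|v\| \leq |c| = \|cI\|_1$ since $v$ lies in the unit ball. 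Being a contraction on a dense subspace of $L^1[0, 1]$, and $V$ being complete, $T$ extends uniquely to a contraction $T \from L^1[0, 1] \to V$.

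It then remains to confirm that this $T$ is an $\cat{A}$-morphism and that it is the only one. Both identities $T(I) = v$ and $T \of \gamma = \delta \of (T \oplus T)$ hold on dyadic step functions by construction, and extend to all of $L^1[0, 1]$ by continuity of both sides together with density. For uniqueness, any $\cat{A}$-morphism $S$ out of $(L^1[0, 1], I, \gamma)$ must obey the same forced recursion, hence agree with $T$ on every $D_n$, and therefore on all of $L^1[0, 1]$ by continuity. The main obstacle is thus concentrated entirely in the construction on the dense subspace: checking that the forced recursive definition is well defined independently of the level (using $\delta(v, v) = v$) and is norm-nonincreasing (using that $\delta$ is a contraction and that the norm on $V \oplus V$ reproduces the $L^1$ scaling). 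Everything after that is a routine density-and-continuity argument.
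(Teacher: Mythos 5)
Your proposal is correct and follows essentially the same route as the paper: dyadic step functions form a dense filtered union of subspaces $\dyfn_n$, $\gamma$ restricts to an isomorphism $\dyfn_n \oplus \dyfn_n \to \dyfn_{n+1}$, the structure forces the map level by level (with $\delta(v,v)=v$ giving consistency and contractivity of $\delta$ giving the norm bound), and density plus completeness finishes both existence and uniqueness. The only difference is presentational: the paper phrases the extension step as $L^1[0,1]$ being the colimit of $\dyfn_0 \incl \dyfn_1 \incl \cdots$ in $\Ban$, whereas you carry out the equivalent explicit contraction-on-a-dense-subspace argument.
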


Thus, for each object $(V, v, \delta)$ of $\cat{A}$, there is a unique map
$(L^1[0, 1], I, \gamma) \to (V, v, \delta)$ in $\cat{A}$.  Certain choices
of $(V, v, \delta)$ are especially consequential. For example, there is an
object of $\cat{A}$ consisting of the ground field $\F$ (either $\R$ or
$\C$), together with $1 \in \F$ and the arithmetic mean $m\from \F \oplus
\F \to \F$. We prove that the unique map $ (L^1[0, 1], I, \gamma) \to (\F,
1, m) $ in $\cat{A}$ is the integration operator $\int_0^1$.

The universal characterization of the \emph{integrable} functions therefore
gives rise to a unique characterization of \emph{integration}.

In fact, Theorem~A is just the case $p = 1$ of a general result
characterizing $(L^p[0, 1], I, \gamma)$ as the initial object of a category
$\cat{A}^p$ (Theorem~\ref{thm:Ap}). Here $1 \leq p < \infty$. When $p =
\infty$, the analogous result characterizes not $L^\infty[0, 1]$ but the
space $C(\{0, 1\}^\nat)$ of continuous functions on the Cantor set
(Proposition~\ref{propn:Ainfty}).

The key to Theorem~A is that integration on $[0, 1]$ is a continuous notion
of mean, and means are built into the category $\cat{A} = \cat{A}^1$ via
the definition of the norm on $V \oplus W$. To define $\cat{A}^p$ for $p >
1$, we simply replace the arithmetic mean in the definition of $\cat{A}$ by
the power mean of order $p$. 

Abstract characterizations are all well and good, but it is natural to want
to realize $L^p[0, 1]$ as a \emph{function} space. We do so in two
senses. First, by an observation of Meckes (Proposition~\ref{propn:mm}),
the universal property of $L^1[0, 1]$ produces a canonical map $L^1[0, 1]
\to C[0, 1]$, which in concrete terms maps $f \in L^1[0, 1]$ to the
continuous function $x \mapsto \int_0^x f$. Differentiating $\int_0^x f$,
we recover $f$ as a function (up to equality almost everywhere). In the
opposite direction, the universal property of $C(\{0, 1\}^\nat)$ leads to a
canonical map $C[0, 1] \to L^p[0, 1]$ for every $p$, which in concrete
terms is the usual inclusion (Proposition~\ref{propn:A-incl-infty}).

Repeatedly exploiting the universal property of $L^p[0, 1]$, we derive
further maps fundamental in analysis, including the canonical pairing
\[
L^p[0, 1] \times L^q[0, 1] \to \F
\]
for conjugate exponents $p$ and $q$ (Proposition~\ref{propn:conj-prod}).

The universal property of $L^p[0, 1]$ stems from the self-similarity of
$[0, 1]$ (see `Related work' below), and the same idea can be used to
characterize $L^p(X)$ for other self-similar spaces
$X$. Proposition~\ref{propn:seq-spaces} is a universal characterization of
the sequence space $\ell^p$ (the case $X = \nat$) for each $p \in [1,
\infty)$, and also of the sequence space $c_0$.

The results described so far uniquely \emph{characterize} the spaces
concerned, but general categorical techniques actually \emph{construct}
them (Remark~\ref{rmk:alg-endo}). Specifically, $L^p[0, 1]$ is the initial
algebra for a certain endofunctor on the category of pointed Banach spaces,
and a standard theorem of Ad\'amek on initial algebras constructs it for
us. The construction realizes $L^p[0, 1]$ as the colimit, in the category
of Banach spaces, of the spaces $E_n$ of step functions whose points of
discontinuity are integer multiples of $2^{-n}$. Similar constructions
produce $C(\{0, 1\}^\nat)$, $\ell^p$ and $c_0$. The moral is that even if
Lebesgue had never formulated his theory, categorical machinery would still
construct its basic objects.

Section~\ref{sec:B} concerns functions on an arbitrary finite measure
space.  The $L^1$ construction on measure spaces is functorial in two ways:
contravariantly with respect to measure-preserving maps and covariantly
with respect to embeddings. To combine the two, let $\Mpar$ be the category
of finite measure spaces and measure-preserving \emph{partial} maps.
Then $L^1$ defines a functor $\Mpar^\op \to \Ban$.

Our second main theorem characterizes this functor $L^1$ together with, for
each $X$, the function $I_X \in L^1(X)$ with constant value $1$.  We define
a category $\cat{B}$ of pairs $(F, v)$ consisting of a functor $\Mpar^\op
\to \Ban$ and an element $v_X \in F(X)$ for each $X$, subject to some
simple axioms. Then:

\begin{thmb}
The initial object of $\cat{B}$ is $(L^1, I)$.
\end{thmb}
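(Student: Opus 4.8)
The plan is to bootstrap from Theorem~A. A map $(L^1, I) \to (F, v)$ in $\cat{B}$ is a natural transformation $\alpha \from L^1 \Rightarrow F$ of functors $\Mpar^\op \to \Ban$ whose components satisfy $\alpha_X(I_X) = v_X$ for every finite measure space $X$; I must show that exactly one such $\alpha$ exists. The first step is to pin down the single component $\alpha_{[0,1]}$. The self-similarity of $[0,1]$ is witnessed by its two half-scale copies, that is, by a pair of morphisms in $\Mpar$ out of which $\gamma$ is built; applying $F$ to these morphisms equips $F([0,1])$ with a linear map $\delta \from F([0,1]) \oplus F([0,1]) \to F([0,1])$. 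I would check that the axioms defining $\cat{B}$ force $v_{[0,1]}$ into the unit ball, make $\delta$ a contraction for the norm $\hlf(\|\cdot\| + \|\cdot\|)$, and give $\delta(v_{[0,1]}, v_{[0,1]}) = v_{[0,1]}$, so that $(F([0,1]), v_{[0,1]}, \delta)$ is an object of $\cat{A}$. Theorem~A then yields a \emph{unique} $\cat{A}$-map, namely a unique contraction $\alpha_{[0,1]} \from L^1[0,1] \to F([0,1])$ sending $I$ to $v_{[0,1]}$ and commuting with $\gamma$ and $\delta$.

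The second step is to propagate $\alpha_{[0,1]}$ to every object of $\Mpar$ using naturality, thereby obtaining existence and uniqueness simultaneously. Every finite measure space decomposes into its non-atomic part and its atoms, and these are linked to standard models by measure-preserving partial maps: scaling adjusts the total mass, embeddings of measurable subsets restrict, the non-atomic part is comparable to a subinterval of $[0,1]$, and each atom is a one-point space whose $L^1$ is a copy of $\F$. For a one-point space the condition $\alpha(I) = v$ together with linearity determines $\alpha$ outright; for the non-atomic part, naturality of $\alpha$ with respect to the comparison maps expresses $\alpha_X$ in terms of $\alpha_{[0,1]}$. I would verify that these maps are plentiful enough to determine $\alpha_X$ on all of $L^1(X)$, check that the resulting family is natural with respect to \emph{every} measure-preserving partial map, and confirm $\alpha_X(I_X) = v_X$. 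Uniqueness is then immediate from the same analysis: any $\cat{B}$-map $\beta$ restricts to an $\cat{A}$-map at $[0,1]$, hence equals $\alpha_{[0,1]}$ there by Theorem~A, and naturality along the comparison maps forces $\beta_X = \alpha_X$ for all $X$.

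The main obstacle will be this reduction of an arbitrary finite measure space to intervals and atoms. Unlike the clean case of $[0,1]$ handled directly by Theorem~A, a general object of $\Mpar$ may carry atoms, may have total mass different from $1$, and need not be a standard (separable) measure space, so I cannot simply invoke an isomorphism with an interval. The real work is to show that the measure-preserving partial maps available in $\Mpar$ --- scalings, inclusions, and comparisons of the non-atomic part with subintervals --- are rich enough that naturality determines $\alpha_X$ on the whole of $L^1(X)$ and not merely on a dense or distinguished subspace, all while keeping every map a contraction so that the data stay inside $\Ban$ and inside $\cat{B}$. Controlling these norm estimates uniformly across all $X$ is where the delicacy lies.
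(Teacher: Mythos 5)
There is a fatal obstruction at your very first step. Your plan requires equipping $F([0,1])$ with a map $\delta \from F([0,1]) \oplus F([0,1]) \to F([0,1])$ by applying $F$ to morphisms of $\Mpar$ that witness the self-similarity of $[0,1]$. But no such morphisms exist: every map in $\Mpar$ is a measure-preserving partial map, and both measure-preserving maps and embeddings preserve total mass, whereas each half of $[0,1]$ has mass $\hlf$. The rescaling $x \mapsto 2x$ is a measure-preserving map from $([0,\hlf],\lambda)$ to $([0,1],\hlf\lambda)$, which is a \emph{different} object of $\Mpar$ from $([0,1],\lambda)$, and there is no morphism of $\Mpar$ between these two objects in either direction. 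So the functoriality of $F$ yields no structure on $F([0,1])$ of the kind needed to form an object of $\cat{A}$, and Theorem~A cannot be invoked. (This is the flip side of the paper's own remark that Theorem~A is not derivable from Theorem~B; the two are logically independent.) Your second step is also unrepaired even granting the first: a general finite measure space need not be separable, so comparison maps with $[0,1]$ cannot reach a dense subspace of $L^1(X)$, and you give no existence recipe that is demonstrably independent of the chosen decomposition into atoms and non-atomic part.

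The paper's proof never mentions $[0,1]$ and runs through simple functions instead. For uniqueness: naturality with respect to the embedding $Y \incl X$ together with $\alpha_Y(I_Y) = v_Y$ forces $\alpha_X(I^X_Y) = v^X_Y$ for every measurable $Y \sub X$, which determines $\alpha_X$ on indicator functions, hence by linearity on simple functions, hence by density and contractivity on all of $L^1(X)$. For existence: one defines $\psi_X(f) = \sum_{c} c\, v^X_{f^{-1}(c)}$ on simple functions and verifies linearity and naturality (Proposition~\ref{propn:BV}), proves the norm estimate $\|\psi_X(f)\| \leq \|f\|_1$ using axioms \Kcomp--\Knorm\ so that $\psi$ descends to the quotient $S^1(X)$ of simple functions modulo a.e.\ equality (Proposition~\ref{propn:BN}), and finally passes to $L^1(X)$ as the completion of $S^1(X)$, using that the completion functor $\NVS \to \Ban$ is a left adjoint and left adjoints preserve initial objects (Theorem~\ref{thm:Bp}). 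In short, the role you assign to Theorem~A is played in the paper by indicator functions and the density of the simple functions.
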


As for Theorem~A, the universal characterization of the integrable
functions produces a unique characterization of integration. Indeed, there
is another object $(\F, t)$ of $\cat{B}$, where $\F$ is the constant
functor whose value is the ground field and $t_X$ is the total measure of a
measure space $X$. The unique map $(L^1, I) \to (\F, t)$ in $\cat{B}$ is
nothing but integration.

A variant of Theorem~B uses the functoriality of $L^1$ with respect to
embeddings only. Thus, $\Mpar^\op$ is replaced by the category of measure
spaces and embeddings, and $\cat{B}$ is replaced by a simpler category
$\Bemb$. We prove that $(L^1, I)$ is also initial in $\Bemb$. As a
consequence, we obtain the construction $(f, \mu) \mapsto f \dee\mu$
(Proposition~\ref{propn:B-action}). This in turn allows us, via the
Radon--Nikodym theorem, to realize elements of the abstractly characterized
space $L^1(X)$ as equivalence classes of functions on $X$.

At the cost of a further axiom, Theorem~B can be generalized to any $p \in
[1, \infty]$. That is, $(L^p, I)$ is the initial object of a suitably
defined category $\Bpar^p$ (and, similarly, of $\Bemb^p$), where $\Bpar^1 =
\cat{B}$. This is Theorem~\ref{thm:Bp}. In the case $p = 2$, the functor
$L^2$ takes values in the category of Hilbert spaces, and also possesses a
slightly simpler universal property (Proposition~\ref{propn:Hilb}).

Although Theorem~B concerns arbitrary finite measure spaces and Theorem~A
concerns only the space $[0, 1]$, Theorem~A is not a special case of
Theorem~B. Indeed, the hypotheses of Theorem~A make no mention of the set
$[0, 1]$, let alone its $\sigma$-algebra or Lebesgue measure on it, whereas
the starting point for Theorem~B is the category of measure spaces. To my
knowledge, there is no reasonable way to derive Theorem~A from Theorem~B.

\paragraph{Related work}
This work arose from a universal characterization of the real interval by
Freyd~\cite{FreARA} (itself related to a characterization by Escard\'o and
Simpson~\cite{EsSi}). A topological variant of Freyd's result, due to the
author, runs as follows \cite[Theorem 2.5]{GSSO}. A \demph{bipointed space}
is a topological space with an ordered pair of distinct, closed basepoints;
an example is $[0, 1]$ with $0$ and $1$. Two bipointed spaces $X$ and $Y$
can be joined to form a new bipointed space $X \vee Y$, identifying the
second basepoint of $X$ with the first of $Y$; for example, $[0, 1] \vee
[0, 1] \cong [0, 2]$. The theorem is that in the category of bipointed
spaces $X$ equipped with a basepoint-preserving continuous map $X \to X
\vee X$, the terminal object is $[0, 1]$ (topologized as usual) with the
map $2 \times \dashbk\from [0, 1] \to [0, 2]$. (This goes some way towards
explaining the special role of $[0, 1]$ in homotopy theory.) Theorem~A is a
kind of dual to this, using the same self-similarity of the interval to
exhibit $L^1[0, 1]$ as an \emph{initial} object.

Algebraic approaches to integration go at least as far back as the 1965
work of Irving Segal~\cite{SegaAIT}. For example, he proved (p.~432) that
every commutative $\R$-algebra equipped with a linear functional $\int$
satisfying certain axioms must be a dense subalgebra of $L^\infty(X)$ for
some measure space $X$.

\paragraph{Convention} 
We work throughout over a field $\F$, which is either $\R$ or $\C$.

\section{Integration on $[0, 1]$}
\label{sec:A}

Let $p \in [1, \infty)$. In this section, we uniquely characterize the
Banach space $L^p[0, 1]$ together with two further pieces of data: the
function $I \in L^p[0, 1]$ taking constant value $1$, and the juxtaposition
map
\[
\gamma \from L^p[0, 1] \oplus L^p[0, 1] \to L^p[0, 1]
\]
defined on $f, g \in L^p[0, 1]$ by
\[
(\gamma(f, g))(x)
=
\begin{cases}
f(2x)           &\text{if } x < 1/2,    \\
g(2x - 1)       &\text{if } x > 1/2
\end{cases}
\]
(Figure~\ref{fig:juxt}). We also show how categorical methods not only
characterize $L^p[0, 1]$ uniquely, but also construct it explicitly
(Remark~\ref{rmk:alg-endo}).

For us, a \demph{map} of Banach spaces is a linear contraction (map with
norm less than or equal to $1$), and $\Ban$ is the category of Banach
spaces and maps between them. Thus, the isomorphisms in $\Ban$ are the
\emph{isometric} isomorphisms. For Banach spaces $V$ and $W$, let $V
\oplus_p W$ denote the direct sum with norm
\[
\|(v, w)\|_p = \Bigl( \hlf \bigl( \|v\|^p + \|w\|^p \bigr) \Bigr)^{1/p}.
\]
Let $\cat{A}^p$ be the category whose objects are triples $(V, v, \delta)$,
where $V$ is a Banach space, $v \in V$ with $\|v\| \leq 1$, and
$\delta\from V \oplus_p V \to V$ is a map of Banach spaces satisfying
$\delta(v, v) = v$.  The maps $(V', v', \delta') \to (V, v, \delta)$ in
$\cat{A}^p$ are the maps $\theta\from V' \to V$ in $\Ban$ that preserve the
structure: 
\[
\theta(v') = v,
\qquad
\theta(\delta'(v'_1, v'_2)) = \delta(\theta(v'_1), \theta(v'_2))
\]
for all $v'_1, v'_2 \in V'$. For example, the category $\cat{A}$ of the
Introduction is $\cat{A}^1$.

The map $\gamma$ is an isometric isomorphism $L^p[0, 1] \oplus_p L^p[0, 1]
\to L^p[0, 1]$, and in particular, a contraction. Hence $(L^p[0, 1], I,
\gamma)$ is an object of $\cat{A}^p$.

\begin{thm}[Universal property of \textmd{$L^p[0, 1]$}]
\label{thm:Ap}
Let $1 \leq p < \infty$. Then
$(L^p[0, 1], I, \gamma)$ is the initial object of $\cat{A}^p$.
\end{thm}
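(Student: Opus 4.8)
The plan is to exploit the self-similarity of $[0, 1]$ encoded by $\gamma$ to force the value of any structure-preserving map on a dense subspace, and then extend by continuity. Fix an object $(V, v, \delta)$ of $\cat{A}^p$ and suppose $\theta$ is any map $(L^p[0, 1], I, \gamma) \to (V, v, \delta)$ in $\cat{A}^p$. For $n \geq 0$, let $D_n \sub L^p[0, 1]$ be the subspace of functions constant on each dyadic interval $[k/2^n, (k+1)/2^n)$, and let $D = \bigcup_n D_n$ be the space of dyadic step functions, which is dense in $L^p[0, 1]$ because $1 \leq p < \infty$. The axioms force $\theta$ on $D$: linearity gives $\theta(cI) = cv$ for scalars $c$ (as $D_0 = \F \cdot I$), while structure-preservation gives $\theta(\gamma(g, h)) = \delta(\theta(g), \theta(h))$. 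Since $\gamma$ restricts to an isometric isomorphism $D_n \oplus_p D_n \to D_{n+1}$ — splitting a level-$(n+1)$ step function into its left and right halves — these two rules determine $\theta$ on all of $D$ by induction on $n$. This already gives uniqueness on $D$, hence (by continuity of any contraction) on $L^p[0, 1]$.

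For existence I would turn these forced rules into a definition: define $\theta$ on $D$ by the binary-tree recursion with $\theta(cI) = cv$ at the leaves and $\theta(\gamma(g, h)) = \delta(\theta(g), \theta(h))$ at internal nodes. The one point requiring care for well-definedness is consistency under refinement — that splitting a constant piece of value $c$ into two identical pieces does not alter the output — and this is exactly the identity $\delta(cv, cv) = c\,\delta(v, v) = cv$, which is where the axiom $\delta(v, v) = v$ enters. A routine induction on $n$, using the linearity of both $\delta$ and $\gamma$, then shows that $\theta \from D \to V$ is linear.

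The crux is the contraction estimate $\|\theta(f)\| \leq \|f\|_p$ for $f \in D$, and this is precisely where the choice of the $\oplus_p$ norm pays off. I would argue by induction on $n$: the base case $\|\theta(cI)\| = |c|\,\|v\| \leq |c| = \|cI\|_p$ uses $\|v\| \leq 1$, and for the inductive step, writing $f = \gamma(g, h)$ with $g, h \in D_n$,
\[
\|\theta(f)\| = \|\delta(\theta(g), \theta(h))\| \leq \Bigl(\hlf(\|\theta(g)\|^p + \|\theta(h)\|^p)\Bigr)^{1/p} \leq \Bigl(\hlf(\|g\|_p^p + \|h\|_p^p)\Bigr)^{1/p} = \|f\|_p,
\]
where the first inequality is contractivity of $\delta$ with respect to $\oplus_p$, the second is the inductive hypothesis, and the final equality is exactly the statement that $\gamma$ is an $\oplus_p$-isometry. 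Thus $\theta$ is a linear contraction on the dense subspace $D$, and so extends uniquely to a contraction $\theta \from L^p[0, 1] \to V$.

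It remains to confirm that the extension still preserves the structure. Here $\theta(I) = v$ is immediate since $I \in D_0$, and the relation $\theta \of \gamma = \delta \of (\theta \oplus \theta)$ holds on pairs $(g, h)$ of dyadic step functions by construction (the halves of $\gamma(g, h)$ being $g$ and $h$); since $\gamma$, $\delta$ and $\theta$ are continuous and such pairs are dense in $L^p[0, 1] \oplus_p L^p[0, 1]$, the relation holds everywhere. Hence $\theta$ is a morphism of $\cat{A}^p$, and by the uniqueness already established on $D$ it is the only one. The main obstacle is not a single deep step but the bookkeeping of the dyadic decomposition — verifying that the recursion is well-defined (the refinement-consistency point) and genuinely independent of the chosen level — together with the one content-bearing computation, the contraction estimate, which is what pins the norm on $V \oplus_p V$ down to the power mean of exponent $p$.
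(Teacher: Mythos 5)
Your proposal is correct and follows essentially the same route as the paper: the dyadic step-function filtration $D_0 \sub D_1 \sub \cdots$ with $\gamma$ restricting to an isomorphism $D_n \oplus_p D_n \to D_{n+1}$, uniqueness by induction up the filtration plus density, existence by the same recursion with $\delta(v,v)=v$ guaranteeing consistency under refinement, and a final density argument for structure-preservation. The only cosmetic difference is that the paper packages the dense-subspace extension as a colimit (direct limit) in $\Ban$, which makes your explicit contraction estimate automatic since each stage is a composite of contractions.
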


\begin{proof}
For $n \geq 0$, let $\dyfn_n$ be the subspace of $L^p[0, 1]$ consisting of
the equivalence classes of step functions constant on each of the intervals
$((i - 1)/2^n, i/2^n)$ ($1 \leq i \leq 2^n$). Write $\dyfn = \bigcup_{n
\geq 0} \dyfn_n$, which is the space of step functions whose points of
discontinuity are dyadic rationals.

The assumption that $p < \infty$ implies that $\dyfn$ is dense in the set
of all step functions on $[0, 1]$, which in turn is dense in $L^p[0, 1]$;
so $\dyfn$ is dense in $L^p[0, 1]$.  It follows that $L^p[0, 1]$ is the
colimit (direct limit) of the diagram $\dyfn_0 \incl \dyfn_1 \incl \cdots$
in $\Ban$ \cite[Examples~2.2.4(h) and~2.2.6(g)]{BorcHCA1}.  Also note that
$\gamma$ restricts to an isomorphism $\dyfn_n \oplus_p \dyfn_n \to \dyfn_{n
+ 1}$ for each $n \geq 0$.

Now let $(V, v, \delta) \in \cat{A}^p$.  We must show that there exists a
unique map $(L^p[0, 1], I, \gamma) \to (V, v, \delta)$ in $\cat{A}^p$.

\paragraph{Uniqueness}
Let $\theta$ be a map $(L^p[0, 1], I, \gamma) \to (V, v, \delta)$ in
$\cat{A}^p$. Then $\theta(I) = v$, which by linearity determines
$\theta|_{\dyfn_0}$ uniquely. Suppose inductively that $\theta|_{\dyfn_n}$
is determined uniquely. Since $\theta$ is a map in $\cat{A}^p$, the square
\[
\xymatrix{
\dyfn_n \oplus_p \dyfn_n 
\ar[r]^{\gamma}
\ar[d]_{\theta|_{\dyfn_n} \oplus \theta|_{\dyfn_n}}     &
\dyfn_{n + 1}
\ar[d]^{\theta|_{\dyfn_{n + 1}}}  \\
V \oplus_p V   
\ar[r]_\delta   &
V
}
\]
commutes. But $\gamma \from \dyfn_n \oplus_p \dyfn_n \to \dyfn_{n + 1}$ is
invertible, so $\theta|_{\dyfn_{n + 1}}$ is uniquely determined by
$\theta|_{\dyfn_n}$, completing the induction. Hence $\theta$ is uniquely
determined on the dense subspace $\dyfn$ of $L^p[0, 1]$, and so, as
$\theta$ is bounded, on $L^p[0, 1]$ itself.

\paragraph*{Existence}
For each $n \geq 0$, define a map $\theta_n \from \dyfn_n \to V$ in
$\Ban$ as follows: $\theta_0 \from \dyfn_0 \iso \F \to V$ is given by
$\theta_0(I) = v$ (and is a contraction because $\|v\| \leq 1$), and
inductively,
\[
\theta_{n + 1}
=
\Bigl(
\dyfn_{n + 1} \ltoby{\gamma^{-1}}
\dyfn_n \oplus_p \dyfn_n \ltoby{\theta_n \oplus \theta_n}
V \oplus_p V \toby{\delta}
V
\Bigr).
\]
Using the axiom that $\delta(v, v) = v$, one checks that $\theta_{n + 1}$
extends $\theta_n$ for each $n \geq 0$. Since $L^p[0, 1]$ is the colimit of
$\dyfn_0 \incl \dyfn_1 \incl \cdots$, there is a unique map $\theta
\from L^p[0, 1] \to V$ such that $\theta|_{\dyfn_n} = \theta_n$ for each
$n$.

It remains to prove that $\theta$ is a map $(L^p[0, 1], I, \gamma) \to (V,
v, \delta)$ in $\cat{A}^p$.  First, $\theta(I) = \theta_0(I) = v$.  Second,
we must show that the lower square of the diagram
\[
\xymatrix@M+.3mm{
\dyfn_n \oplus_p \dyfn_n 
\ar@/_4pc/[dd]_{\theta_n \oplus \theta_n} 
\ar@{^{(}->}[d]
\ar[r]^-{\gamma}       
&
\dyfn_{n + 1} 
\ar@/^4pc/[dd]^{\theta_{n + 1}}   
\ar@{^{(}->}[d]                 
\\
L^p[0, 1] \oplus_p L^p[0, 1]      
\ar[d]_{\theta \oplus \theta}   
\ar[r]^-{\gamma}        
&
L^p[0, 1]
\ar[d]^\theta   
\\
V \oplus_p V 
\ar[r]_-{\delta}        
&
V
}
\]
commutes.  The upper square commutes trivially, the outer square commutes
by definition of $\theta_{n + 1}$, and the triangles commute by definition
of $\theta$.  Thus, the lower square commutes on the subspace $\dyfn_n
\oplus_p \dyfn_n$ of $L^p[0, 1] \oplus L^p[0, 1]$, for each $n$.  But
\[
\bigcup_{n \geq 0} \dyfn_n \oplus_p \dyfn_n 
=
\dyfn \oplus_p \dyfn,
\]
and $E \oplus_p E$ is dense in $L^p[0, 1] \oplus_p L^p[0, 1]$ because $E$
is dense in $L^p[0, 1]$ and $\oplus_p$ induces the product topology. Hence
the lower square does commute.
\end{proof}

Taking $p = 1$, we immediately obtain a characterization of integration.
Let $m: \F \oplus_1 \F \to \F$ denote the arithmetic mean: $m(x, y) =
\hlf(x + y)$. Then $(\F, 1, m)$ is an object of $\cat{A}^1$.

\begin{propn}[Uniqueness of integration]
\label{propn:A-integration}
The unique map
\[
(L^1[0, 1], I, \gamma) \to (\F, 1, m)
\]
in $\cat{A}^1$ is the integration operator $\int_0^1$.
\end{propn}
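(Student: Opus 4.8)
The plan is to exploit the uniqueness half of Theorem~\ref{thm:Ap}, which we may now assume. We already know from that theorem that there is exactly one map $(L^1[0, 1], I, \gamma) \to (\F, 1, m)$ in $\cat{A}^1$; call it $\theta$. So the entire content of the proposition is the \emph{identification} of this abstractly-defined $\theta$ with the concrete operator $\int_0^1$. The strategy is therefore not to reprove uniqueness but simply to verify that $\int_0^1$ itself is a map in $\cat{A}^1$, since then $\int_0^1$ must coincide with $\theta$ by uniqueness.

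Concretely, I would first check that $\int_0^1 \from L^1[0, 1] \to \F$ is a map in $\Ban$, i.e.\ a linear contraction: linearity is standard, and $|\int_0^1 f| \leq \int_0^1 |f| = \|f\|_1$ gives norm $\leq 1$. Next I would verify the two structure-preservation conditions defining a map in $\cat{A}^1$. The first, $\int_0^1 I = 1$, is immediate since $I$ has constant value $1$ on an interval of length $1$. The second is the compatibility with the juxtaposition maps, namely that for all $f, g \in L^1[0, 1]$,
\[
\int_0^1 \gamma(f, g) = m\Bigl( \int_0^1 f, \int_0^1 g \Bigr) = \hlf \Bigl( \int_0^1 f + \int_0^1 g \Bigr).
\]
This is the substantive computation: using the definition of $\gamma$, the integral of $\gamma(f, g)$ splits as $\int_0^{1/2} f(2x) \dx + \int_{1/2}^1 g(2x - 1) \dx$, and each substitution ($u = 2x$ on the first, $u = 2x - 1$ on the second) contributes a Jacobian factor of $\hlf$, yielding exactly $\hlf(\int_0^1 f + \int_0^1 g)$. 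Once both conditions hold, $\int_0^1$ is a map $(L^1[0, 1], I, \gamma) \to (\F, 1, m)$ in $\cat{A}^1$, and by the uniqueness asserted in Theorem~\ref{thm:Ap} it equals $\theta$.

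The point worth flagging is that there is no real \textbf{obstacle} here: every step is a routine verification, and the only subtlety is conceptual rather than technical. One must resist the temptation to \emph{construct} the map from scratch via the inductive recipe in the existence proof; the honest shortcut is to observe that the classical integral, whose existence and elementary properties we are entitled to presuppose as part of ordinary analysis, happens to satisfy the two axioms, and then to let initiality do the identifying. The factor $\hlf$ in the norm on $V \oplus_1 V$ is precisely what makes the arithmetic mean $m$ a contraction and what makes the change-of-variables scaling match up, so the computation is essentially forced. In short, this proposition is a clean illustration of the paper's central mechanism: the universal property of the integrable functions pins down integration as the unique structure-preserving map into $(\F, 1, m)$.
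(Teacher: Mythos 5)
Your proposal is correct and follows exactly the paper's own argument: verify that $\int_0^1$ is a map in $\cat{A}^1$ (linearity, the contraction inequality $|\int_0^1 f| \leq \int_0^1 |f|$, the normalization $\int_0^1 I = 1$, and the compatibility $\int_0^1 \gamma(f,g) = \hlf(\int_0^1 f + \int_0^1 g)$), then invoke the uniqueness clause of Theorem~\ref{thm:Ap}. The paper states these verifications without the explicit change-of-variables computation, but the substance is identical.
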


\begin{proof}
By Theorem~\ref{thm:Ap}, it suffices to prove that $\int_0^1$ is a map in
$\cat{A}^1$. This statement amounts to the linearity of $\int_0^1$ together
with the properties
\begin{align*}
\left| \int_0^1 f \right|       &
\leq 
\int_0^1 |f|,   \\
\int_0^1 I      &
=
1,      \\              
\int_0^1 \gamma(f, g)   &
=
\frac{1}{2} \biggl( \int_0^1 f + \int_0^1 g \biggr)
\end{align*}
($f, g \in L^1[0, 1]$).
\end{proof}

There is an entirely elementary corollary, using no categorical language:

\begin{cor}
\label{cor:elem-integral-intvl}
$\int_0^1$ is the unique bounded linear functional on $L^1[0, 1]$
such that $\int_0^1 1 = 1$ and 
\[
\int_0^1 f(x) \dx	
=
\frac{1}{2}
\biggl(
\int_0^1 f \Bigl(\frac{x}{2}\Bigr) \dx
+
\int_0^1 f \Bigl(\frac{x + 1}{2}\Bigr) \dx
\biggr)
\]
for all $f \in L^1[0, 1]$.
\end{cor}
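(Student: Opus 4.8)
The plan is to deduce the corollary from Proposition~\ref{propn:A-integration} after rephrasing the displayed functional equation in terms of $\gamma$. First I would record the identity $f = \gamma(a, b)$, where $a(x) = f(x/2)$ and $b(x) = f((x + 1)/2)$: a one-line check on the intervals $[0, 1/2)$ and $(1/2, 1]$ gives $\gamma(a, b)(x) = f(x)$, and $a, b$ are recovered from $f$ by these same formulas, so $f \mapsto (a, b)$ is exactly $\gamma^{-1}$. It then follows that, for a linear functional $\phi$ on $L^1[0, 1]$, the displayed equation holding for all $f$ is equivalent to $\phi(\gamma(a, b)) = \tfrac{1}{2}(\phi(a) + \phi(b))$ holding for all $a, b$ — that is, to the compatibility $\phi \of \gamma = m \of (\phi \oplus \phi)$ with the arithmetic mean $m$. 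Together with $\phi(I) = 1$, these are precisely the algebraic conditions for $\phi$ to be a morphism $(L^1[0, 1], I, \gamma) \to (\F, 1, m)$ in $\cat{A}^1$, save for the requirement that $\phi$ be a contraction.

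For existence I would simply invoke Proposition~\ref{propn:A-integration}: it shows that $\int_0^1$ is a morphism in $\cat{A}^1$, so $\int_0^1 I = 1$ and the $\gamma$-compatibility hold, and by the translation above the latter is the functional equation. (At the elementary level this compatibility is the change of variables $\int_0^1 f(x/2)\dx = 2\int_0^{1/2} f$ and $\int_0^1 f((x + 1)/2)\dx = 2\int_{1/2}^1 f$.) Thus $\int_0^1$ is a bounded linear functional with the two required properties.

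The real work is uniqueness, and the subtlety I expect to be the main obstacle is that morphisms in $\cat{A}^1$ are contractions whereas the corollary quantifies over all bounded functionals, so Proposition~\ref{propn:A-integration} does not apply verbatim. My resolution is to observe that the uniqueness half of the proof of Theorem~\ref{thm:Ap} uses only boundedness, never the norm bound $\|\phi\| \leq 1$. Concretely, given any bounded linear $\phi$ with $\phi(I) = 1$ and the functional equation, the condition $\phi(I) = 1$ fixes $\phi$ on $\dyfn_0 = \F \cdot I$; since $\gamma$ restricts to an isomorphism $\dyfn_n \oplus_1 \dyfn_n \iso \dyfn_{n + 1}$, the functional equation determines $\phi|_{\dyfn_{n + 1}}$ from $\phi|_{\dyfn_n}$; and by induction $\phi$ is pinned down on the dense subspace $\dyfn = \bigcup_n \dyfn_n$, hence on $L^1[0, 1]$ by continuity. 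This shows that at most one bounded functional can satisfy the conditions, so together with existence it must be $\int_0^1$. Beyond getting the substitution directions right, the only point requiring care is confirming that this induction genuinely upgrades the contraction-level uniqueness of Proposition~\ref{propn:A-integration} to uniqueness among all bounded functionals.
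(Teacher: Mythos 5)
Your proposal is correct and follows the paper's own argument: the displayed equation is exactly the $\gamma$-compatibility condition $\phi\of\gamma = m\of(\phi\oplus\phi)$, existence comes from Proposition~\ref{propn:A-integration}, and uniqueness among merely \emph{bounded} functionals holds because the uniqueness half of the proof of Theorem~\ref{thm:Ap} uses only boundedness, not contractivity. The paper's proof says precisely this, just more tersely.
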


\begin{proof}
This is just a restatement of Proposition~\ref{propn:A-integration}, except
that the hypothesis that $\int_0^1$ is a contraction has been weakened to
boundedness. That suffices because the uniqueness part of the proof of
Theorem~\ref{thm:Ap} uses only boundedness, not contractivity, of the maps
in $\cat{A}^p$.  
\end{proof}

The universal property of $L^1[0, 1]$ also produces integration on
subintervals of $[0, 1]$. The following result is due to Mark Meckes
(personal communication).

Write $C_*[0, 1]$ for the Banach space of continuous functions $F \from [0,
1] \to \F$ such that $F(0) = 0$, with the sup norm. Define $i \in C_*[0,
1]$ by $i(x) = x$, and
\[
\kappa \from C_*[0, 1] \oplus C_*[0, 1] \to C_*[0, 1]
\]
by
\[
(\kappa(F, G))(x)
=
\begin{cases}
\hlf F(2x)              &
\text{if } 0 \leq x \leq \hlf, \\
\hlf \bigl(F(1) + G(2x - 1)\bigr)       &
\text{if } \hlf \leq x \leq 1
\end{cases}
\]
($F, G \in C_*[0, 1]$). Then $(C_*[0, 1], i, \kappa)$ is an object of
$\cat{A}^1$. 

\begin{propn}[Meckes]
\label{propn:mm}
The unique map $(L^1[0, 1], I, \gamma) \to (C_*[0, 1], i, \kappa)$ in
$\cat{A}^1$ is the definite integration operator 
\[
\begin{array}{cccc}
\int_0^\dashbk \from    &L^1[0, 1]      &\to    &C_*[0, 1]      \\[1ex]
                        &f              &\mapsto&\int_0^x f.
\end{array}
\]
\end{propn}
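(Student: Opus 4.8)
The plan is to lean on the universal property established in Theorem~\ref{thm:Ap}. Since $(L^1[0, 1], I, \gamma)$ is initial in $\cat{A}^1$, there is exactly one map from it to $(C_*[0, 1], i, \kappa)$; to identify that map with $\int_0^\dashbk$, it therefore suffices to prove that $\int_0^\dashbk$ is itself a map in $\cat{A}^1$. Concretely, I would verify four things: that $\int_0^\dashbk$ lands in $C_*[0, 1]$, that it is a linear contraction $L^1[0, 1] \to C_*[0, 1]$, that it carries $I$ to $i$, and that it intertwines $\gamma$ with $\kappa$.

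The first three checks are quick. For $f \in L^1[0, 1]$, the function $x \mapsto \int_0^x f$ is (absolutely) continuous and vanishes at $0$, so it lies in $C_*[0, 1]$, and linearity is inherited from the integral. For the contraction property, the estimate $\bigl| \int_0^x f \bigr| \leq \int_0^x |f| \leq \|f\|_1$ holds for every $x$, whence the sup norm of $\int_0^\dashbk f$ is at most $\|f\|_1$. Finally, $\int_0^x I = x = i(x)$ gives $\int_0^\dashbk I = i$.

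The substantive step, and the only place where anything needs care, is the intertwining identity
\[
\int_0^\dashbk \gamma(f, g) = \kappa\Bigl( \int_0^\dashbk f, \int_0^\dashbk g \Bigr).
\]
Writing $F = \int_0^\dashbk f$ and $G = \int_0^\dashbk g$, I would compute $\int_0^x \gamma(f, g)$ on each half of $[0, 1]$ using the definition of $\gamma$ and a change of variables. For $0 \leq x \leq \hlf$ one gets $\int_0^x f(2t) \dee t = \hlf F(2x)$; for $\hlf \leq x \leq 1$ one splits the integral at $\hlf$ and obtains $\hlf F(1) + \hlf \int_0^{2x - 1} g = \hlf \bigl( F(1) + G(2x - 1) \bigr)$. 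These are exactly the two branches of $\kappa(F, G)$, so the identity holds on all of $[0, 1]$.

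With these checks complete, $\int_0^\dashbk$ is a morphism in $\cat{A}^1$, and initiality of $(L^1[0, 1], I, \gamma)$ forces it to be the unique such morphism. I expect the change of variables in the intertwining computation to be the only step requiring attention; the well-definedness, linearity, contractivity, and unit conditions are all immediate, and the structure of the argument is exactly parallel to that of Proposition~\ref{propn:A-integration}, where one likewise reduces an identification to the verification that a concrete operator respects the structure of $\cat{A}^1$.
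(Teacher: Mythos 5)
Your proposal is correct and follows exactly the paper's argument: invoke the initiality of $(L^1[0,1], I, \gamma)$ from Theorem~\ref{thm:Ap} and reduce everything to checking that $\int_0^\dashbk$ is a morphism in $\cat{A}^1$, i.e.\ the contraction bound $\bigl|\int_0^x f\bigr| \leq \int_0^1 |f|$, the identity $\int_0^x 1 = x$, and the intertwining of $\gamma$ with $\kappa$. The paper simply states these three facts as elementary, whereas you additionally carry out the change-of-variables computation verifying the third; the substance is the same.
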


\begin{proof}
By Theorem~\ref{thm:Ap}, it suffices to show that $\int_0^\dashbk$ is a map
in $\cat{A}^1$. This amounts to the linearity of integration together with
the elementary facts that
\begin{align*}
\left| \int_0^x f \right|   &
\leq
\int_0^1 |f|,   \\
\int_0^x 1      &
=
x,      \\
\int_0^x \gamma(f, g)   &
=
\biggl( 
\kappa \biggl( \int_0^\dashbk f, \int_0^\dashbk g \biggr) 
\biggr) (x)
\end{align*}
for all $f, g \in L^1[0, 1]$ and $x \in [0, 1]$.
\end{proof}

Theorem~\ref{thm:Ap} uniquely characterizes $L^1[0, 1]$ as an
\emph{abstract} Banach space, but Proposition~\ref{propn:mm} allows us to
realize its elements as equivalence classes of \emph{functions}. Given an
element $\alpha \in L^1[0, 1]$, first apply the map of
Proposition~\ref{propn:mm} to obtain an element of $C_*[0, 1]$, then
differentiate to obtain a function defined almost everywhere on $[0,
1]$. This function is a representative of $\alpha$, since every integrable
function $f$ satisfies $f(x) = \frac{d}{dx} \int^x_0 f$ for almost all $x$.

\begin{remarks}
\label{rmks:cantor}
\begin{enumerate}
\item
The analogue of Theorem~\ref{thm:Ap} for $p = \infty$ is false. Let
$\oplus_\infty$ denote the direct sum with norm $\|(v, w)\| = \max\{\|v\|,
\|w\|\}$. Define $\cat{A}^\infty$ analogously to $\cat{A}^p$. Then by the
same argument as for $p < \infty$, the initial object of $\cat{A}^\infty$
is the closure of $\dyfn$ in $L^\infty[0, 1]$, together with $I$ and
$\gamma$. This is not $L^\infty[0, 1]$; for example, $\overline{\dyfn}$
does not contain the indicator function $I_{[0, 1/3]}$.

\item
\label{rmk:cantor-replace}
In Theorem~\ref{thm:Ap}, $[0, 1]$ can equivalently be replaced by 
Cantor space $\{0, 1\}^\nat$ with the probability measure in
which the set of sequences beginning with $n$ prescribed bits has measure
$2^{-n}$. The measure-preserving surjection
\[
\begin{array}{cccc}
s \from &\{0, 1\}^\nat          &\to            &[0, 1] \\
        &(x_0, x_1, \ldots)     &\mapsto        &
\displaystyle\sum_{n = 0}^\infty x_n 2^{-(n + 1)}
\end{array}
\]
induces an isomorphism $L^p[0, 1] \iso L^p( \{0, 1\}^\nat )$ for
each $p \in [1, \infty]$.  Under this isomorphism, $\gamma$ corresponds to
the map
\[
\gamma:  
L^p\bigl( \{0, 1\}^\nat \bigr) \oplus L^p\bigl( \{0, 1\}^\nat \bigr)
\to
L^p\bigl( \{0, 1\}^\nat \bigr)
\]
defined by
\begin{equation}
\label{eq:cantor-gamma-def}
\bigl(\gamma(f, g)\bigr)(x_0, x_1, \ldots)
=
\begin{cases}
f(x_1, x_2, \ldots)     &\text{if } x_0 = 0,    \\
g(x_1, x_2, \ldots)     &\text{if } x_0 = 1
\end{cases}
\end{equation}
($f, g \in L^p(\{0, 1\}^\nat)$, $x_i \in \{0, 1\}$). Thus,
$\bigl( L^p( \{0, 1\}^\nat), I, \gamma \bigr)$ is initial in
$\cat{A}^p$ whenever $p < \infty$, where $I$ is the constant function $1$
on $\{0, 1\}^\nat$.
\end{enumerate}
\end{remarks}

We now show that the initial object of $\cat{A}^\infty$ consists of not
\emph{bounded} functions, but \emph{continuous} functions, on the Cantor
space rather than the interval.  Give $\{0, 1\}^\nat$ the product topology
and $C(\{0, 1\}^\nat)$ the sup norm. The map
\[
\gamma \from 
C\bigl(\{0, 1\}^\nat\bigr) \oplus_\infty C\bigl(\{0, 1\}^\nat\bigr)
\to
C\bigl(\{0, 1\}^\nat\bigr)
\]
defined by formula~\eqref{eq:cantor-gamma-def} is an isometric
isomorphism, so $\bigl(C(\{0, 1\}^\nat), I, \gamma\bigr)$ is an object of
$\cat{A}^\infty$.

\begin{propn}[Universal property of functions on Cantor space]
\label{propn:Ainfty}
$\bigl(C(\{0, 1\}^\nat), I, \gamma)$ is the initial object of
$\cat{A}^\infty$. 
\end{propn}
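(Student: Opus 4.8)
The plan is to run the proof of Theorem~\ref{thm:Ap} essentially unchanged, working in the Cantor-space model of Remark~\ref{rmk:cantor-replace} rather than on the interval. Inside $C(\{0, 1\}^\nat)$, let $\dyfn_n$ denote the subspace of continuous functions depending only on the first $n$ coordinates $x_0, \ldots, x_{n - 1}$ (equivalently, the functions constant on each of the $2^n$ basic cylinders), and set $\dyfn = \bigcup_{n \geq 0} \dyfn_n$, the algebra of locally constant functions on the Cantor space. These are the analogues of the dyadic step spaces used for $p < \infty$, and the entire proof rests on transplanting the three structural facts established there.

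First I would record that $\gamma$ restricts to an \emph{isometric} isomorphism $\dyfn_n \oplus_\infty \dyfn_n \to \dyfn_{n + 1}$: formula~\eqref{eq:cantor-gamma-def} exhibits $\gamma(f, g)$ as the continuous function whose restriction to $\{x_0 = 0\}$ is $f$ and to $\{x_0 = 1\}$ is $g$, so $\gamma$ is a bijection onto $\dyfn_{n + 1}$ with $\|\gamma(f, g)\|_\infty = \max\{\|f\|_\infty, \|g\|_\infty\} = \|(f, g)\|_\infty$. Second --- and this is the one place where the argument genuinely departs from the false $p = \infty$ analogue of Remarks~\ref{rmks:cantor} --- I would show that $\dyfn$ is dense in $C(\{0, 1\}^\nat)$ for the sup norm. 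This is where the real work lies, but it is standard: $\{0, 1\}^\nat$ is a compact metric space, so every continuous $f$ is uniformly continuous, and $f$ is the uniform limit, as $n \to \infty$, of the locally constant functions obtained by replacing $f$ on each level-$n$ cylinder by its value at a chosen point of that cylinder; alternatively one invokes Stone--Weierstrass, since $\dyfn$ is a point-separating subalgebra containing the constants. Third, because $\dyfn$ is dense in the Banach space $C(\{0, 1\}^\nat)$ and the inclusions $\dyfn_n \incl \dyfn_{n + 1}$ are isometric, $C(\{0, 1\}^\nat)$ is the colimit of $\dyfn_0 \incl \dyfn_1 \incl \cdots$ in $\Ban$, by the same reasoning (the colimit is the completion of the union) as in the proof of Theorem~\ref{thm:Ap}.

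With these three facts in hand, the uniqueness and existence arguments transcribe verbatim from Theorem~\ref{thm:Ap}, replacing $L^p[0, 1]$ by $C(\{0, 1\}^\nat)$ and $\oplus_p$ by $\oplus_\infty$ throughout. For uniqueness, $\theta(I) = v$ pins down $\theta|_{\dyfn_0}$, and the square built from $\gamma$ and $\delta$, together with the invertibility of $\gamma \from \dyfn_n \oplus_\infty \dyfn_n \to \dyfn_{n + 1}$, propagates this up the chain, hence over the dense subspace $\dyfn$ and so, by boundedness, over all of $C(\{0, 1\}^\nat)$. For existence, the maps $\theta_n \from \dyfn_n \to V$ given by $\theta_0(I) = v$ and $\theta_{n + 1} = \delta \of (\theta_n \oplus \theta_n) \of \gamma^{-1}$ are compatible (using $\delta(v, v) = v$), assemble through the colimit into a single contraction $\theta \from C(\{0, 1\}^\nat) \to V$, and preserve the two pieces of structure by the density of $\dyfn \oplus_\infty \dyfn$ in $C(\{0, 1\}^\nat) \oplus_\infty C(\{0, 1\}^\nat)$. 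The only conceptual obstacle is thus the density of $\dyfn$ in $C(\{0, 1\}^\nat)$, which replaces the failed density in $L^\infty[0, 1]$; everything else is an unchanged rerun of the finite-$p$ proof.
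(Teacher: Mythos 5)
Your proposal is correct and follows essentially the same route as the paper: the paper also takes $\dyfn_n$ to be the functions depending only on the first $n$ coordinates, reduces everything to the density of $\dyfn = \bigcup_n \dyfn_n$ in $C(\{0,1\}^\nat)$, and proves that density exactly as you do, by uniform continuity on the compact metric space $\{0,1\}^\nat$ and approximation of $f$ by the locally constant functions $f \of \pi_n$ obtained from a chosen point in each level-$n$ cylinder. The remaining colimit, uniqueness and existence steps are, as you say, a verbatim rerun of the proof of Theorem~\ref{thm:Ap}.
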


\begin{proof}
For $n \geq 0$, let $\dyfn_n$ be the subspace of $C(\{0,
1\}^\nat)$ consisting of the functions $f$ such that for all $x =
(x_0, x_1, \ldots)$ and $y = (y_0, y_1, \ldots)$ in $\{0, 1\}^\nat$,
\[
x_i = y_i \text{ for all } i < n 
\implies
f(x) = f(y).
\]
Then $E_0$ is the linear span of $I$ and $E_{n + 1} = \gamma(E_n \oplus
E_n)$ for each $n \geq 0$. The argument used to prove Theorem~\ref{thm:Ap}
also shows that $\bigl(C(\{0, 1\}^\nat), I, \gamma)$ is initial in
$\cat{A}^\infty$, as long as $\dyfn = \bigcup_{n \geq 0} \dyfn_n$ is
dense in $C(\{0, 1\}^\nat)$. We show this now.

The topology on $\{0, 1\}^\nat$ is metrized by $d(x, y) = 2^{-\!\min\{n
\colon x_n \neq y_n\}}$. For $n \geq 0$, define $\pi_n \from \{0, 1\}^\nat
\to \{0, 1\}^\nat$ by
\[
\pi_n(x) = (x_0, \ldots, x_{n - 1}, 0, 0, \ldots).
\]
Thus, $d(x, \pi_n(x)) \leq 2^{-n}$ for all $x$. Now let $f \in C(\{0,
1\}^\nat)$; we prove that $f \in \overline{\dyfn}$. For each $n \geq 0$, we
have $f \of \pi_n \in \dyfn_n$ and
\[
\| f - f\of \pi_n \|_\infty 
\leq
\sup_{x, y:\, d(x, y) \leq 2^{-n}} |f(x) - f(y)|.
\]
But since $\{0, 1\}^\nat$ is compact, $f$ is uniformly continuous, so the
right-hand side converges to $0$ as $n \to \infty$. Hence $f = \lim_{n \to
\infty} f \of \pi_n \in \overline{\dyfn}$, as required.
\end{proof}

The next two results show how the universal property induces some standard
inclusions between function spaces. First let $1 \leq p \leq r < \infty$.
For any Banach spaces $V$ and $W$, the identity map
\[
V \oplus_r W \to V \oplus_p W
\]
is a contraction, by the elementary fact that power means are increasing in
their order \cite[Theorem~16]{HLP}. Hence $\cat{A}^p$ is a
subcategory of $\cat{A}^r$, and in particular, $(L^p[0, 1], I, \gamma)$ can
be regarded as an object of $\cat{A}^r$. By Theorem~\ref{thm:Ap}, there is
a unique map $(L^r[0, 1], I, \gamma) \to (L^p[0, 1], I, \gamma)$ in
$\cat{A}^r$.

\begin{propn}
\label{propn:A-incl}
For $1 \leq p \leq r < \infty$, the unique map $ (L^r[0, 1], I, \gamma) \to
(L^p[0, 1], I, \gamma) $ in $\cat{A}^r$ is the inclusion $L^r[0, 1] \incl
L^p[0, 1]$.
\end{propn}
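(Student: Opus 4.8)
The plan is to exploit initiality exactly as in Propositions~\ref{propn:A-integration} and~\ref{propn:mm}. By Theorem~\ref{thm:Ap} there is a \emph{unique} map $(L^r[0, 1], I, \gamma) \to (L^p[0, 1], I, \gamma)$ in $\cat{A}^r$, so it suffices to verify that the set-theoretic inclusion $j \from L^r[0, 1] \incl L^p[0, 1]$ is itself a map in $\cat{A}^r$. Concretely, this requires three things: that $j$ is a contraction from the $r$-norm upstairs to the $p$-norm downstairs, that $j(I) = I$, and that $j \of \gamma = \gamma \of (j \oplus j)$ as maps $L^r[0, 1] \oplus_r L^r[0, 1] \to L^p[0, 1]$ (where the left-hand $\gamma$ is the juxtaposition on $L^r[0, 1]$ and the right-hand $\gamma$ is that on $L^p[0, 1]$, regarded as the structure map of an object of $\cat{A}^r$).

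The only substantive point is the contraction property, namely $\|f\|_p \leq \|f\|_r$ for every $f \in L^r[0, 1]$. Since $[0, 1]$ carries a probability measure, this is precisely the nesting of the $L^p$ spaces on a finite measure space, which follows from the monotonicity of power means already invoked above \cite[Theorem~16]{HLP} (equivalently, from Jensen's inequality applied to the convex function $t \mapsto t^{r/p}$). Thus $j$ indeed lands in $L^p[0, 1]$ and is a contraction $L^r[0, 1] \to L^p[0, 1]$. The remaining two conditions are immediate: $I$ is the constant function $1$ in both spaces, so $j(I) = I$; and $\gamma$ is given by the same pointwise juxtaposition formula on $L^r[0, 1]$ and $L^p[0, 1]$ while $j$ is the inclusion, so $j(\gamma(f, g))$ and $\gamma(j(f), j(g))$ name the same function for all $f, g \in L^r[0, 1]$, whence the relevant square commutes. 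Therefore $j$ is a map in $\cat{A}^r$, and by the uniqueness part of Theorem~\ref{thm:Ap} it is \emph{the} unique such map.

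I expect no genuine obstacle here: the entire content reduces to the elementary inequality $\|f\|_p \leq \|f\|_r$, which is the very same power-mean fact that made the inclusion $\cat{A}^p \subseteq \cat{A}^r$ sensible and exhibited $(L^p[0, 1], I, \gamma)$ as an object of $\cat{A}^r$ in the first place. The one thing to be careful about is bookkeeping of the norms, stating the contraction with the $r$-norm on the domain and the $p$-norm on the codomain, and tracking that $\gamma$ is being used as a map out of $\oplus_r$ (not $\oplus_p$) throughout the diagram.
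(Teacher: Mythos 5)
Your proposal is correct and is exactly the paper's argument: exhibit the inclusion as a map in $\cat{A}^r$ (the only substantive point being $\|f\|_p \leq \|f\|_r$ on the probability space $[0,1]$, i.e.\ the same power-mean monotonicity that made $(L^p[0,1], I, \gamma)$ an object of $\cat{A}^r$ in the first place) and then invoke the uniqueness clause of Theorem~\ref{thm:Ap}. The paper compresses all of this into one sentence; you have merely spelled out the verification.
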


\begin{proof}
Clearly the inclusion is one such map, and by Theorem~\ref{thm:Ap}, it is
the only one.
\end{proof}

Similarly, $\cat{A}^p$ is a subcategory of $\cat{A}^\infty$ for every $p$,
so $(L^p[0, 1], I, \gamma)$ is an object of $\cat{A}^\infty$. Hence by
Proposition~\ref{propn:Ainfty}, there is a unique map
\begin{equation}
\label{eq:CL}
\bigl( C( \{0, 1\}^\nat), I, \gamma \bigr)
\to 
(L^p[0, 1], I, \gamma)
\end{equation}
in $\cat{A}^\infty$. On the other hand, composition with the map $s \from
\{0, 1\}^\nat \to [0, 1]$ of
Remark~\ref{rmks:cantor}\bref{rmk:cantor-replace} defines a map $s^* \from
C[0, 1] \to C(\{0, 1\}^\nat)$.

\begin{propn}
\label{propn:A-incl-infty}
Let $1 \leq p < \infty$. The composite of $s^*$ with the unique
map~\eqref{eq:CL} in $\cat{A}^\infty$ is the inclusion $C[0, 1] \incl L^p[0,
1]$.
\end{propn}

\begin{proof}
Let $i \from [0, 1] \to \{0, 1\}^\nat$ be any section of the surjection $s$
(a choice of binary expansion of each element of $[0, 1]$). One easily
checks that $f \mapsto f\of i$ is a map of the form~\eqref{eq:CL} in
$\cat{A}^\infty$, so it is the unique such map. Hence the composite of
$s^*$ with~\eqref{eq:CL} is the map $C[0, 1] \to L^p[0, 1]$ given by $g
\mapsto g \of s \of i = g$.
\end{proof}

This result relates the abstractly characterized space $L^p[0, 1]$ to the
concrete function space $C[0, 1]$.

Next we use universal properties to construct the multiplication map
\begin{equation}
\label{eq:mult}
L^p[0, 1] \times L^q[0, 1] \toby{\cdot} L^1[0, 1]
\end{equation}
for each $p, q > 1$ such that $\tfrac{1}{p} + \tfrac{1}{q} = 1$.  
Composing~\eqref{eq:mult} with the integration operator $\int_0^1\from
L^1[0, 1] \to \F$ (derived in Proposition~\ref{propn:A-integration}), we
obtain the standard pairing between $L^p[0, 1]$ and $L^q[0, 1]$.  

For the rest of this section, write $L^p$ as shorthand for $L^p[0, 1]$.
For Banach spaces $V$ and $W$, write $\HOM(V, W)$ for the Banach space of
all bounded linear maps from $V$ to $W$, with the operator norm.

To construct the multiplication map~\eqref{eq:mult}, we will give
$\HOM(L^q, L^1)$ the structure of an object of $\cat{A}^p$.  By
Theorem~\ref{thm:Ap}, this structure will induce a map $L^p \to \HOM(L^q,
L^1)$, or equivalently a map $L^p \times L^q \to L^1$.  We will show that
this is multiplication.

To give $\HOM(L^q, L^1)$ the structure of an object of $\cat{A}^p$, first
recall that we have already constructed the inclusion $j \from L^q \incl
L^1$ (Proposition~\ref{propn:A-incl}).  This $j$ is a map in $\Ban$, that
is, an element of the closed unit ball of $\HOM(L^q, L^1)$.

Now define a linear map
\[
\Gamma \from 
\HOM(L^q, L^1) \oplus_p \HOM(L^q, L^1) \to \HOM(L^q, L^1)
\]
as follows: for $\phi_1, \phi_2 \in \HOM(L^q, L^1)$, let $\Gamma(\phi_1,
\phi_2)$ be the composite
\[
L^q \ltoby{\gamma^{-1}} 
L^q \oplus L^q \ltoby{\phi_1 \oplus \phi_2} 
L^1 \oplus L^1 \toby{\gamma}
L^1.
\]

\begin{lemma}
\label{lemma:hom-obj}
$(\HOM(L^q, L^1), j, \Gamma)$ is an object of $\cat{A}^p$.
\end{lemma}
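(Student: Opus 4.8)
The plan is to verify the three defining conditions for an object of $\cat{A}^p$: that $\HOM(L^q, L^1)$ is a Banach space with $j$ in its unit ball, that $\Gamma$ is a linear contraction, and that $\Gamma(j, j) = j$. The first condition is standard, since $\HOM(L^q, L^1)$ is complete under the operator norm because $L^1$ is complete; and $\|j\| \leq 1$ was already observed in the text preceding the lemma, $j$ being a map in $\Ban$ by Proposition~\ref{propn:A-incl}. Linearity of $\Gamma$ is immediate from the defining formula $\Gamma(\phi_1, \phi_2) = \gamma \of (\phi_1 \oplus \phi_2) \of \gamma^{-1}$, as $(\phi_1, \phi_2) \mapsto \phi_1 \oplus \phi_2$ is linear. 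This leaves two substantive points: that $\Gamma(j, j) = j$ and that $\Gamma$ is a contraction.

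For $\Gamma(j, j) = j$, I would first record that the square
\[
\xymatrix{
L^q \oplus L^q \ar[r]^{\gamma} \ar[d]_{j \oplus j} & L^q \ar[d]^{j} \\
L^1 \oplus L^1 \ar[r]_{\gamma} & L^1
}
\]
commutes: $\gamma$ is given by the very same juxtaposition formula on $L^q$ and on $L^1$, while $j$ merely reinterprets an $L^q$-function as an element of $L^1$, so juxtaposing and then including agrees with including and then juxtaposing. Consequently $\Gamma(j, j) = \gamma \of (j \oplus j) \of \gamma^{-1} = j \of \gamma \of \gamma^{-1} = j$.

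The main work is the contraction estimate $\|\Gamma(\phi_1, \phi_2)\| \leq \bigl(\hlf(\|\phi_1\|^p + \|\phi_2\|^p)\bigr)^{1/p}$ for all $\phi_1, \phi_2$. Fix $h \in L^q$ and write $\gamma^{-1}(h) = (h_1, h_2)$, so that $\Gamma(\phi_1, \phi_2)(h) = \gamma(\phi_1 h_1, \phi_2 h_2)$. Since $\gamma$ is an isometric isomorphism $L^1 \oplus_1 L^1 \to L^1$, we have $\|\gamma(\phi_1 h_1, \phi_2 h_2)\| = \hlf(\|\phi_1 h_1\| + \|\phi_2 h_2\|) \leq \hlf(\|\phi_1\|\,\|h_1\| + \|\phi_2\|\,\|h_2\|)$, while the isometry of $\gamma \from L^q \oplus_q L^q \to L^q$ gives $\|h\| = \bigl(\hlf(\|h_1\|^q + \|h_2\|^q)\bigr)^{1/q}$. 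The crux is then exactly Hölder's inequality on the two-point probability space $\{0, 1\}$ with uniform measure: writing $a_i = \|\phi_i\|$ and $b_i = \|h_i\|$,
\[
\hlf(a_1 b_1 + a_2 b_2) \leq \bigl(\hlf(a_1^p + a_2^p)\bigr)^{1/p}\bigl(\hlf(b_1^q + b_2^q)\bigr)^{1/q}.
\]
Combining the displays bounds $\|\Gamma(\phi_1, \phi_2)(h)\|$ by $\bigl(\hlf(\|\phi_1\|^p + \|\phi_2\|^p)\bigr)^{1/p}\|h\|$, and taking the supremum over $\|h\| \leq 1$ gives the claim. I do not expect a genuine obstacle here: once the two isometry properties of $\gamma$ are used to rewrite both sides, the estimate is precisely the finite-dimensional Hölder inequality flagged in the text preceding the lemma, so the only care needed is in organizing the reduction cleanly.
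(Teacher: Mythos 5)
Your proof is correct and follows essentially the same route as the paper's: both reduce the contraction estimate to the isometry of $\gamma$ on $L^1$ and on $L^q$ together with H\"older's inequality for two-term sums (your ``two-point probability space'' formulation is just the paper's $\R^2$ H\"older inequality with the factors of $\hlf$ absorbed). The only difference is presentational: you spell out the commuting square behind $\Gamma(j,j)=j$, which the paper dismisses as immediate from the definitions.
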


\begin{proof}
It is immediate from the definitions that $\Gamma(j, j) = j$, so we only
have to show that $\Gamma$ is a contraction. Let $g \in L^q$. Writing
$\gamma^{-1}(g) = (g_1, g_2)$,
\begin{align}
\| (\Gamma(\phi_1, \phi_2))(g)\|        &
=
\| \gamma(\phi_1(g_1), \phi_2(g_2) \|
\nonumber       \\
&
=
\hlf\bigl(\|\phi_1(g_1)\| + \|\phi_2(g_2)\|\bigr)
\label{eq:Gamma-iso}    \\
&
\leq
\hlf(\|\phi_1\| \|g_1\| + \|\phi_2\| \|g_2\|)   
\nonumber       \\
&
\leq
\hlf
(\|\phi_1\|^p + \|\phi_2\|^p)^{1/p}
(\|g_1\|^q + \|g_2\|^q)^{1/q}
\label{eq:Gamma-hoelder}        \\
&
=
\Bigl( \hlf \bigl( \|\phi_1\|^p + \|\phi_2\|^p \bigr) \Bigr)^{1/p}
\Bigl( \hlf \bigl( \|g_1\|^q + \|g_2\|^q \bigr) \Bigr)^{1/q}
\nonumber       \\
&
=
\|(\phi_1, \phi_2)\| \|g\|
\label{eq:Gamma-final}
\end{align}
where~\eqref{eq:Gamma-iso} holds because $\gamma \from L^1 \oplus_1 L^1 \to
L^1$ is an isometry, \eqref{eq:Gamma-hoelder} is by H\"older's inequality
for vectors in $\R^2$, and~\eqref{eq:Gamma-final} holds because $\gamma
\from L^q \oplus_q L^q \to L^q$ is an isometry. Hence $\Gamma$ is a
contraction, as claimed.
\end{proof}

By Theorem~\ref{thm:Ap}, then, there is a unique map
\begin{equation}
\label{eq:pair-map-type}
(L^p, I, \gamma) \to (\HOM(L^q, L^1), j, \Gamma)
\end{equation}
in $\cat{A}^p$.  As a linear map $L^p \to \HOM(L^q, L^1)$, it corresponds
to a bilinear map 
\[
\square \from L^p \times L^q \to L^1.
\]

\begin{propn}
\label{propn:conj-prod}
The map $\square \from L^p \times L^q \to L^1$ is the product 
$(f, g) \mapsto f\cdot g$.
\end{propn}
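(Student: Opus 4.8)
The plan is to invoke the uniqueness clause of Theorem~\ref{thm:Ap}. Since $\square$ is \emph{defined} as the bilinear map corresponding to the unique morphism \eqref{eq:pair-map-type} in $\cat{A}^p$, it suffices to exhibit \emph{one} morphism $(L^p, I, \gamma) \to (\HOM(L^q, L^1), j, \Gamma)$ in $\cat{A}^p$ whose associated bilinear map is pointwise multiplication; the two must then coincide. So I would let $M \from L^p \to \HOM(L^q, L^1)$ be the linear map defined by $M(f)(g) = f \cdot g$, where $f \cdot g$ is the pointwise product, and verify that $M$ is a morphism of $\cat{A}^p$. Three things need checking.

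First, $M$ must land in $\HOM(L^q, L^1)$ and be a contraction into it. This is exactly H\"older's inequality for functions on $[0, 1]$, which gives $\|f \cdot g\|_1 \leq \|f\|_p \|g\|_q$ and hence $\|M(f)\| \leq \|f\|_p$. As flagged in the discussion preceding Lemma~\ref{lemma:hom-obj}, this is the one genuinely analytic input here, in contrast to the two-dimensional H\"older inequality used earlier to build the object $(\HOM(L^q, L^1), j, \Gamma)$. Second, $M$ must preserve the distinguished point: since $I$ is the constant function $1$, we have $M(I)(g) = I \cdot g = g$, so $M(I)$ is precisely the inclusion $j \from L^q \incl L^1$ of Proposition~\ref{propn:A-incl}.

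Third, and this is the step I expect to carry the real content, $M$ must intertwine $\gamma$ and $\Gamma$, that is, $M(\gamma(f_1, f_2)) = \Gamma(M(f_1), M(f_2))$ for all $f_1, f_2 \in L^p$. I would prove this by evaluating both sides at an arbitrary $g \in L^q$ and comparing them pointwise in $x \in [0, 1]$. Writing $\gamma^{-1}(g) = (g_1, g_2)$, the juxtaposition formula gives $g_1(y) = g(y/2)$ and $g_2(y) = g((y + 1)/2)$, and the definition of $\Gamma$ unwinds the right-hand side to $\gamma(f_1 \cdot g_1, f_2 \cdot g_2)$. On the lower half $x < 1/2$ this equals $f_1(2x) g_1(2x) = f_1(2x) g(x)$, which is exactly $(\gamma(f_1, f_2) \cdot g)(x)$; the upper half $x > 1/2$ is symmetric. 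The key point is the compatibility of \emph{juxtaposition with pointwise multiplication}: slicing $g$ into its two rescaled halves via $\gamma^{-1}$ is precisely what makes the product distribute across $\gamma$.

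With these three checks in place, $M$ is a morphism in $\cat{A}^p$, so by the uniqueness in Theorem~\ref{thm:Ap} it coincides with the morphism \eqref{eq:pair-map-type}; reading off the associated bilinear maps then gives $\square(f, g) = M(f)(g) = f \cdot g$, as claimed. The only real obstacle is the third step, but once the formulas for $g_1$ and $g_2$ are recorded it reduces to matching two piecewise expressions on $(0, 1/2)$ and $(1/2, 1)$; the appeal to H\"older in the first step is standard.
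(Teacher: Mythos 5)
Your proposal is correct and follows essentially the same route as the paper: exhibit pointwise multiplication $f \mapsto f \cdot \dashbk$ as a morphism $(L^p, I, \gamma) \to (\HOM(L^q, L^1), j, \Gamma)$ in $\cat{A}^p$ (contractivity via H\"older on $[0,1]$, preservation of the basepoint, and the intertwining identity $\gamma(f_1 \cdot g_1, f_2 \cdot g_2) = \gamma(f_1, f_2)\cdot\gamma(g_1, g_2)$), then invoke the uniqueness clause of Theorem~\ref{thm:Ap}. Your pointwise verification of the third step on the two halves of $[0,1]$ is just a slightly more explicit rendering of the paper's appeal to the definition of $\gamma$.
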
 

\begin{proof}
By H\"older's inequality for functions on $[0, 1]$, there is a linear
contraction 
\[
\begin{array}{cccc}
\theta\from     &L^p    &\to            &\HOM(L^q, L^1),        \\
                &f      &\mapsto        &f\cdot\dashbk.
\end{array}
\]
By the uniqueness part of Theorem~\ref{thm:Ap}, it suffices to show that
$\theta$ is a map of the form~\eqref{eq:pair-map-type} in $\cat{A}^p$.
Evidently $\theta(I) = j$.  It remains to show that the square
\[
\xymatrix@C+2em{
L^p \oplus L^p 
\ar[r]^-\gamma
\ar[d]_{\theta \oplus \theta}
&
L^p
\ar[d]^\theta
\\
\HOM(L^q, L^1) \oplus \HOM(L^q, L^1)    
\ar[r]_-\Gamma
&
\HOM(L^q, L^1)
}
\]
commutes, or equivalently that for all $f_1, f_2 \in L^p$,
\[
\Gamma(\theta(f_1), \theta(f_2)) 
=
\theta(\gamma(f_1, f_2)).
\]
This in turn is equivalent to 
\[
\gamma(f_1\cdot g_1, f_2\cdot g_2) 
= 
\gamma(f_1, f_2) \cdot \gamma(g_1, g_2)
\]
for all $f_1, f_2 \in L^p$ and $g_1, g_2 \in L^q$, which follows from the
definition of $\gamma$.
\end{proof}

The universal property of $L^p[0, 1]$ arises from the self-similarity of
$[0, 1]$, or more specifically, the isomorphism between $[0, 1]$ and two
copies of itself glued end to end. An analogous universal property is
satisfied by $L^p(X)$ for other self-similar spaces $X$. For example, the
isomorphism $\nat \iso \{*\} \amalg \nat$ gives rise to a universal
property characterizing the sequence space $\ell^p$, as follows.

Let $p \in [1, \infty]$. For Banach spaces $V$ and $W$, denote by $V
\dsl{p} W$ the direct sum $V \oplus W$ with norm
\[
\|(v, w)\| 
= 
\begin{cases}
\bigl( \|v\|^p + \|w\|^p \bigr)^{1/p}   &\text{if } p < \infty, \\
\max\{ \|v\|, \|w\| \}                  &\text{if } p = \infty.
\end{cases}
\]
Let $\Seqcat^p$ be the category of Banach spaces $V$ equipped with a map
$\delta \from \F \dsl{p} V \to V$ in $\Ban$. The maps $(V', \delta') \to
(V, \delta)$ in $\Seqcat^p$ are the maps $\theta \from V' \to V$ in $\Ban$
such that $\theta \of \delta' = \delta \of (\id_\F \oplus \theta)$.

Define $\gamma \from \F \dsl{p} \ell^p \to \ell^p$ by
\begin{equation}
\label{eq:seq-gamma}
\gamma\bigl(a, (a_0, a_1, \ldots)\bigr) 
= 
(a, a_0, a_1, \ldots).
\end{equation}
Then $\gamma$ is an isometric isomorphism, so $(\ell^p, \gamma)$ is an
object of $\Seqcat^p$. Write $c_0$ for the Banach space of sequences in
$\F$ converging to $0$, with the sup norm; then there is a map $\gamma
\from \F \dsl{\infty} c_0 \to c_0$ defined by~\eqref{eq:seq-gamma}, and
$(c_0, \gamma)$ is an object of $\Seqcat^\infty$.

\begin{propn}[Universal properties of sequence spaces]
\label{propn:seq-spaces}
$(\ell^p, \gamma)$ is the initial object of $\Seqcat^p$ for $1 \leq p < \infty$,
and $(c_0, \gamma)$ is the initial object of $\Seqcat^\infty$.
\end{propn}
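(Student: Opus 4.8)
The plan is to follow the proof of Theorem~\ref{thm:Ap} almost verbatim, replacing the dyadic filtration of $L^p[0,1]$ by the filtration of $\ell^p$ (resp.\ $c_0$) by finitely supported sequences. For $n \geq 0$, let $E_n$ be the subspace $\{(a_0, \ldots, a_{n-1}, 0, 0, \ldots)\}$ of sequences supported on the first $n$ coordinates, so that $E_0 = \{0\}$ and each $E_n$ is finite-dimensional. Two facts drive the argument. First, $\bigcup_{n \geq 0} E_n$ is the space of finitely supported sequences, which is dense in $\ell^p$ when $p < \infty$ and dense in $c_0$ when $p = \infty$; hence, by the same colimit description of a Banach space as an increasing union of dense subspaces used in Theorem~\ref{thm:Ap} \cite[Examples~2.2.4(h) and~2.2.6(g)]{BorcHCA1}, $\ell^p$ (resp.\ $c_0$) is the colimit of $E_0 \incl E_1 \incl \cdots$ in $\Ban$. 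Second, formula~\eqref{eq:seq-gamma} shows that $\gamma$ restricts to an isometric isomorphism $\F \dsl{p} E_n \to E_{n+1}$ for each $n$.

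Given these, fix $(V, \delta) \in \Seqcat^p$. For uniqueness, a map $\theta \from (\ell^p, \gamma) \to (V, \delta)$ must restrict to the zero map on $E_0 = \{0\}$, and since $\gamma$ carries $\F \dsl{p} E_n$ isomorphically onto $E_{n+1}$ and the defining identity $\theta \of \gamma = \delta \of (\id_\F \oplus \theta)$ holds, $\theta|_{E_{n+1}}$ is forced by $\theta|_{E_n}$; induction determines $\theta$ on the dense subspace $\bigcup_n E_n$, hence on $\ell^p$ (resp.\ $c_0$) by boundedness. For existence, I would set $\theta_0 \from E_0 \to V$ to be the zero map and define $\theta_{n+1} = \delta \of (\id_\F \oplus \theta_n) \of \gamma^{-1} \from E_{n+1} \to V$, each a contraction because $\gamma$ and $\delta$ are; one checks by induction that $\theta_{n+1}$ extends $\theta_n$, passes to the induced map $\theta$ out of the colimit, and verifies that $\theta$ lies in $\Seqcat^p$ by noting that the required square commutes on each $\F \dsl{p} E_n$ and that $\bigcup_n \F \dsl{p} E_n = \F \dsl{p} \bigl( \bigcup_n E_n \bigr)$ is dense in $\F \dsl{p} \ell^p$ (resp.\ $\F \dsl{p} c_0$). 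It is worth flagging that the extension step is easier here than in Theorem~\ref{thm:Ap}: there the base space was the line of constants and the check relied on the axiom $\delta(v,v)=v$, whereas here the base space $E_0$ is the zero space and the analogous check needs only $\delta(0,0)=0$, which is automatic from linearity; this is precisely why $\Seqcat^p$ carries no basepoint datum. Tracing the recursion shows $\theta$ sends $(a_0, \ldots, a_{n-1}, 0, \ldots)$ to $\delta(a_0, \delta(a_1, \ldots, \delta(a_{n-1}, 0) \ldots))$.

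The one genuinely nontrivial point, and the place where the cases $p < \infty$ and $p = \infty$ diverge, is the density claim. For $p < \infty$, finitely supported sequences are dense in $\ell^p$ by a direct tail estimate, but they are \emph{not} dense in $\ell^\infty$; they are, however, dense in $c_0$, since any sequence tending to $0$ is uniformly approximated by its truncations. This is exactly the phenomenon already seen on the interval, where for $p = \infty$ the initial object is $C(\{0,1\}^\nat)$ rather than $L^\infty$ (Proposition~\ref{propn:Ainfty}), and it is the reason the $p = \infty$ statement concerns $c_0$ in place of $\ell^\infty$.
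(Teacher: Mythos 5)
Your proof is correct and follows essentially the same route as the paper's: filter by finitely supported sequences, use the colimit description of $\ell^p$ (resp.\ $c_0$) over that dense increasing union, note that $\gamma$ restricts to an isomorphism $\F \dsl{p} E_n \to E_{n+1}$, and run the uniqueness/existence induction of Theorem~\ref{thm:Ap}. The only cosmetic difference is an index shift (the paper starts its filtration at $E_0 \iso \F$ with $\theta_0(a_0,0,\ldots) = \delta(a_0,0)$, whereas you start at $E_0 = \{0\}$), and your observation that the extension step here needs only $\delta(0,0)=0$ is a correct and worthwhile clarification of why $\Seqcat^p$ carries no basepoint datum.
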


\begin{proof}
We assume that $p < \infty$, the case $p = \infty$ being very similar.

For $n \geq 0$, let $\dyfn_n \sub \ell^p$ be the subspace of sequences of
the form $(a_0, \ldots, a_n, 0, 0, \ldots)$.  Then $\bigcup_{n \geq 0}
\dyfn_n$ is dense in $\ell^p$, so $\ell^p$ is the colimit of the diagram
$\dyfn_0 \incl \dyfn_1 \incl \cdots$ in $\Ban$. Moreover, $\gamma\from \F
\dsl{p} \ell^p \to \ell^p$ restricts to an isomorphism $\F \dsl{p} \dyfn_n
\to \dyfn_{n + 1}$ for each $n \geq 0$.

Let $(V, \delta) \in \Seqcat^p$.  To prove the existence of a map $\theta
\from (\ell^p, \gamma) \to (V, \delta)$ in $\Seqcat^p$, we define $\theta_n
\from \dyfn_n \to V$ inductively by $\theta_0(a_0, 0, 0, \ldots) =
\delta(a_0, 0)$ and
\[
\theta_{n + 1}
=
\Bigl( 
\dyfn_{n + 1} \ltoby{\gamma^{-1}}
\F \dsl{p} E_n \ltoby{\id_\F \oplus \theta_n} 
\F \dsl{p} V \toby{\delta}
V
\Bigr).
\]
Then $\theta_{n + 1}$ extends $\theta_n$ for each $n$, and by the colimit
description of $\ell^p$, there is a unique map $\theta \from \ell^p \to V$
extending every $\theta_n$.  The rest of the proof is similar to that of
Theorem~\ref{thm:Ap}, and further details are omitted.
\end{proof}

\begin{remark}
\label{rmk:alg-endo}
Theorem~\ref{thm:Ap} on $L^p[0, 1]$, Proposition~\ref{propn:Ainfty} on
$C(\{0, 1\}^\nat)$, and Proposition~\ref{propn:seq-spaces} on
$\ell^p$ and $c_0$ are all instances of a general categorical theorem that
not only proves the existence of these initial objects, but also constructs
them.

Let $\cat{D}$ be a category. An \demph{algebra} for an endofunctor $T \from
\cat{D} \to \cat{D}$ is an object $D$ of $\cat{D}$ together with a map
$\delta \from T(D) \to D$. A \demph{map} of algebras $(D',
\delta') \to (D, \delta)$ is a map $\theta \from D' \to D$ in $\cat{D}$
such that $\theta \of \delta' = \delta \of T\theta$. A lemma of Lambek
states that if $(D, \delta)$ is an initial algebra then $\delta$ is an
isomorphism \cite[Lemma~2.2]{LambFTC}.

Now suppose that $\cat{D}$ itself has an initial object, $Z$, that the
diagram
\[
Z \toby{!} 
T(Z) \ltoby{T(!)} 
T^2(Z) \ltoby{T^2(!)} 
\ \cdots
\]
has a colimit (where $!$ is the unique map $Z \to T(Z)$), and that
this colimit is preserved by $T$; in other words, the canonical map
\[
\eta\from 
\colim_n T(T^n(Z)) 
\to
T \bigl( \colim_n T^n(Z) \bigr)
\]
is an isomorphism.  A theorem of Ad\'amek states that $\bigl(
\colim\limits_n T^n(Z), \eta^{-1} \bigr)$ is the initial $T$-algebra
(\cite[p.~591]{AdamFAA} or \cite[Theorem~3.17]{AdamIC}).

For example, let $\cat{D}$ be the category $\Ban_*$ of pairs $(V, v)$ where
$V \in \Ban$ and $v \in V$ with $\|v\| \leq 1$; maps in
$\Ban_*$ are maps in $\Ban$ preserving the chosen points.  (This is the
coslice category $\F/\Ban$.)  Define $T \from \cat{D} \to \cat{D}$ by
\[
T(V, v) = \bigl(V \oplus_p V,\, (v, v)\bigr).
\]
Then the category of $T$-algebras is $\cat{A}^p$. Moreover, $(\F, 1$) is
initial in $\Ban_*$ and the hypotheses of Ad\'amek's theorem hold, so the
initial object of $\cat{A}^p$ is constructed as the colimit over $n \geq 0$
of the objects $T^n(\F, 1)$ of $\Ban_*$.  Concretely, $T^n(\F, 1) =
(\dyfn_n, I)$, where $\dyfn_n$ is as defined in the proof of
Theorem~\ref{thm:Ap}. Hence this categorical method constructs $L^p[0, 1]$
as the colimit of $\dyfn_0 \incl \dyfn_1 \incl \cdots$ in
$\Ban$. Similarly, Propositions~\ref{propn:Ainfty}
and~\ref{propn:seq-spaces} are also instances of Ad\'amek's theorem. In
each of these results, the map denoted by $\gamma$ is an isometric
isomorphism, and Lambek's lemma explains why.
\end{remark}

\section{Integration on an arbitrary measure space}
\label{sec:B}

Our second main theorem characterizes the functor $L^p$ from measure spaces
to Banach spaces, again by a simple universal property.  

The measure on a measure space $X$ is written as $\mu_X$. Throughout,
\emph{all measure spaces are understood to be finite}, in the sense that
$\mu_X(X) < \infty$.

An \demph{embedding} $i \from Y \to X$ of measure spaces is an injection
such that $B \sub Y$ is measurable if and only if $i B \sub X$ is
measurable, and in that case, $\mu_Y(B) = \mu_X(i B)$. In particular, we
deviate from common usage of the term `embedding' by requiring $iY$ to be a
measurable subset of $X$. Measure spaces and embeddings form a category
$\Memb$.

A \demph{measure-preserving partial map} $(A, s) \from X \to Y$ is a
measurable subset $A \sub X$ together with a measure-preserving map $s
\from A \to Y$. Here $A$ is given the unique measure space structure such
that the inclusion $A \incl X$ is an embedding. The composite of 
measure-preserving partial maps
\[
X \ltoby{(A, s)} 
Y \ltoby{(B, t)} 
Z
\]
is $(s^{-1}B, u)$, where $u\from s^{-1}B \to Z$ is defined by $u(x) =
t(s(x))$.  Measure spaces and measure-preserving partial maps form a
category $\Mpar$.

For each $p \in [1, \infty]$, there is a functor 
\[
L^p \from \Mpar^\op \to \Ban,
\]
defined on objects in the usual way and on maps as follows: given a
measure-preserving partial map $(A, s) \from X \to Y$, the induced 
map $L^p(Y) \to L^p(X)$ is $g \mapsto (g \of s)^X$, where $(g \of s)^X$
denotes the composite $g \of s \from A \to \F$ extended by zero to $X$.

Any embedding $i \from Y \to X$ determines a map $(iY, i^{-1}) \from X \to
Y$ in $\Mpar$; note the change of direction. (In particular, every
measurable subset $A$ of $X$ gives a map $(A, \id_A) \from X \to A$.) Also,
every measure-preserving map $s \from X \to Y$ determines a map $(X, s)
\from X \to Y$ in $\Mpar$. Thus, both $\Memb^\op$ and the category $\Mpres$
of measure spaces and measure-preserving maps can be viewed as
subcategories of $\Mpar$:
\[
\xymatrix@R-6.5ex{
\Memb^\op  \ar@{^{(}->}[rd]     &               \\
                                &\Mpar.         \\
\Mpres     \ar@{^{(}->}[ru]     &
}
\]
Furthermore, every measure-preserving partial map factors canonically into
maps of these two special types:
\[
X \ltoby{(A, s)} Y
=
\Bigl(
X \ltoby{(A, \id_A)} 
A \ltoby{(A, s)}
Y
\Bigr).
\]
This has two consequences.  First, any functor $F \from \Mpar^\op \to
\cat{C}$ to any category $\cat{C}$ is determined by its effect on
objects, embeddings, and measure-preserving maps. For simplicity, we
write $Fi$ for the map $F(iY, i^{-1}) \from F(Y) \to F(X)$ induced by an
embedding $i \from Y \to X$, and $Fs$ for the map $F(X, s) \from F(Y) \to
F(X)$ induced by a measure-preserving map $s \from X \to Y$.  Second, a
transformation between two functors $\Mpar^\op \to \cat{C}$ is natural
if and only if it is natural with respect to both embeddings and
measure-preserving maps.

\begin{lemma}
\label{lemma:bc}
Let $F \from \Mpar^\op \to \cat{C}$ be a functor into a category
$\cat{C}$. Let $s \from X \to Y$ be a measure-preserving map and $B$ a
measurable subset of $Y$. Write
\[
\xymatrix@M+.5ex{
s^{-1} B 
\ar[r]^-{s'} 
\ar@{^{(}->}[d]_i       &
B
\ar@{^{(}->}[d]^j       \\
X 
\ar[r]_s        &
Y
}
\]
for the resulting inclusions ($i$ and $j$) and the restriction $s'$ of
$s$. Then the square
\[
\xymatrix{
F(s^{-1}B) 
\ar[d]_-{Fi}     &
F(B)
\ar[l]_-{Fs'} 
\ar[d]^{Fj}     \\
F(X)    &
F(Y) \ar[l]^{Fs}
}
\]
in $\cat{C}$ commutes.
\end{lemma}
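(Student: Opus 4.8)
The plan is to reduce the commutativity of the square in $\cat{C}$ to a single identity of measure-preserving partial maps in $\Mpar$, and then invoke the functoriality of $F$. The conceptual point is that the composition rule of $\Mpar$ has the preimage $s^{-1}B$ built into it, and this is precisely what forces the square to commute; no property of $F$ beyond functoriality is needed.

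First I would rewrite each of the four maps in the square as $F$ of an explicit morphism of $\Mpar$, keeping in mind that $F \from \Mpar^\op \to \cat{C}$ is contravariant and that, by the convention established above, an embedding induces an $\Mpar$-map in the opposite direction. Thus $Fs'$ is $F$ applied to the measure-preserving partial map $(s^{-1}B, s') \from s^{-1}B \to B$; the map $Fi$ is $F$ applied to $(s^{-1}B, i^{-1}) \from X \to s^{-1}B$, where $i^{-1} = \id$ on $s^{-1}B$ since $i$ is an inclusion; similarly $Fj = F(B, \id_B)$ for $(B, \id_B) \from Y \to B$; and $Fs = F(X, s)$ for $(X, s) \from X \to Y$.

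Next I would compute the two composites around the square directly in $\Mpar$. Going down and then along, $X \ltoby{(s^{-1}B,\, i^{-1})} s^{-1}B \ltoby{(s^{-1}B,\, s')} B$ equals $(s^{-1}B, s') \from X \to B$ by the composition rule. Going along and then down, $X \ltoby{(X,\, s)} Y \ltoby{(B,\, \id_B)} B$ also equals $(s^{-1}B, s')$: here the subset produced by the composition rule is exactly $s^{-1}B$ and the map is the restriction $s|_{s^{-1}B} = s'$. Hence the two composites coincide as morphisms of $\Mpar$. Applying the contravariant functor $F$ to this single identity then yields $Fi \of Fs' = Fs \of Fj$, which is the asserted commutativity. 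The only step demanding care is the bookkeeping of contravariance together with the direction-reversal for embeddings; once the four arrows are correctly matched with morphisms of $\Mpar$, the result falls out of the composition rule, so I expect no genuine obstacle.
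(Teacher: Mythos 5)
Your proposal is correct and follows exactly the paper's argument: identify the four arrows as $F$ applied to explicit morphisms of $\Mpar$, check that both composites $X \to B$ equal $(s^{-1}B, s')$ by the composition rule, and conclude by functoriality. The paper's proof is just a terser statement of the same computation.
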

Results of this type are known as Beck--Chevalley
conditions~\cite[Section~IV.9]{MaMo}.

\begin{proof}
The square
\[
\xymatrix@C+4ex{
s^{-1} B
\ar[r]^{(s^{-1}B, s')}  &
B       \\
X 
\ar[u]^{(s^{-1}B, \id_{s^{-1}B})}
\ar[r]_{(X, s)} &
Y
\ar[u]_{(B, \id_B)}
}
\]
in $\Mpar$ commutes, and the result follows by functoriality of $F$.
\end{proof}

\begin{table}
\hspace*{-5.5mm}%
\begin{tabular}{lllll}
\hline
\vphantom{$l^{l^l}$}%
Result  &Categories     &Type of space  &Initial        &Type of        \\
        &               &               &object         &function\\
\hline  
\vphantom{$l^{l^l}$}%
Proposition~\ref{propn:BV}      &
$\BVpar$, $\BVemb$     &
Vector space            &
$(\Simp, I)$    &
simple
\\[1mm]
Proposition~\ref{propn:BN}     &
$\BNpar^p$, $\BNemb^p$  &
Normed vector space     &
$(\simp^p, I)$  &
simple$/\!\!=_{\text{a.e.}}$
\\[1mm]
Theorem~\ref{thm:Bp}    &
$\Bpar^p$, $\Bemb^p$    &
Banach space            &
$(L^p, I)$      &
$p$-integrable$/\!\!=_{\text{a.e.}}$    \\
\hline
\end{tabular}
\caption{Results leading up to the main theorem, Theorem~\ref{thm:Bp}. For
example, Proposition~\ref{propn:BV} concerns two categories, $\BVpar$ and
$\BVemb$, whose objects are vector-space-valued functors together with some
extra data; it states that the initial object of both categories is
$(\Simp, I)$, where $\Simp$ denotes the simple functions.}
\label{table:B-results}
\end{table}

We prove our main theorem in three steps (Table~\ref{table:B-results}),
which informally are as follows. First, the universal \emph{vector space}
obtained from a measure space consists of the simple functions. Second, the
universal \emph{normed vector space} on a measure space consists of the
almost everwhere equivalence classes of simple functions. Third, as the
main theorem, the universal \emph{Banach space} on a measure space consists
of the almost everywhere equivalence classes of integrable functions.  Each
step has two versions, according to whether one considers functoriality
with respect to all measure-preserving partial maps or only the embeddings.

We now begin the first step. 
Two embeddings
\[
Y \toby{i} X \otby{j} Z
\]
of measure spaces are \demph{complementary} if $iY \cap jZ = \emptyset$ and
$iY \cup jZ = X$.
Write $\Vect$ for the category of vector spaces. Let $\BVemb$ be the
category of pairs $(F, v)$ consisting of a functor $F\from \Memb \to \Vect$
together with an element $v_X \in F(X)$ for each measure space $X$,
satisfying the following axiom:
\begin{itemize}
\item[\KKcomp] 
$(Fi)(v_Y) + (Fj)(v_Z) = v_X$ for all pairs of complementary
embeddings $Y \toby{i} X \otby{j} Z$.
\end{itemize}
A map $(F', v') \to (F, v)$ in $\BVemb$ is a natural transformation $\psi
\from F' \to F$ such that $\psi_X(v'_X) = v_X$ for all $X$. 

When $i \from Y \to X$ is an embedding and $i$ is understood, we write
$(Fi)(v_Y) \in F(X)$ as $v^X_Y$. In this notation, axiom~\Kcomp\ states
that $v^X_Y + v^X_Z = v_X$.

For example, there is an object $(\Simp, I)$ of $\BVemb$ defined as
follows. The functor $\Simp \from \Memb \to \Vect$ assigns to each measure
space $X$ the space of simple functions $X \to \F$, and is defined on
embeddings by extending simple functions by zero. The element $I_X \in
\Simp_X$ is the function with constant value $1$. Given an embedding $i
\from Y \to X$, the simple function $I^X_Y = (\Simp i)(I_Y)$ on $X$ is the
indicator function of $iY \sub X$. Axiom~\Kcomp\ is the elementary fact
that $I^X_Y + I^X_Z = I_X$ for any measurable partition $X = Y \amalg Z$.

To incorporate functoriality with respect to all measure-preserving partial
maps, let $\BVpar$ be the category of pairs $(F, v)$ consisting of a
functor $F \from \Mpar^\op \to \Vect$ together with an element $v_X \in
F(X)$ for each measure space $X$, satisfying axiom~\Kcomp\ and:
\begin{itemize}
\item[\KKpres] 
$(Fs)(v_Y) = v_X$ for all measure-preserving maps $X \toby{s} Y$.
\end{itemize}
The maps in $\BVpar$ are defined as in $\BVemb$. 

The functor $\Simp$ on $\Memb$ extends naturally to a functor $\Simp \from
\Mpar^\op \to \Vect$, since the composite of a simple function $Y \to \F$
with a measure-preserving map $X \to Y$ is again simple.  The pair $(\Simp
\from \Mpar^\op \to \Vect, I)$ is an object of $\BVpar$. By abuse of
notation, we write $(\Simp, I)$ for both this object of $\BVpar$ and the
object of $\BVemb$ defined previously, which is its image under the evident
forgetful functor $\BVpar \to \BVemb$.

The following two lemmas establish basic properties of $\BVemb$ and
$\BVpar$. 

\begin{lemma}
\label{lemma:F-additivity}
Let $(F, v) \in \BVemb$.  Let $n \geq 0$ and let $(i_r \from Y_r \to X)_{1
\leq r \leq n}$ be a family of embeddings with pairwise disjoint
images. Then
\[
v^X_{i_1 Y_1 \cup \cdots \cup i_n Y_n}
=
v^X_{Y_1} + \cdots + v^X_{Y_n}.
\]
In particular, $v^X_\emptyset = 0$ for any measure space $X$.
\end{lemma}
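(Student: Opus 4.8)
The plan is to prove the displayed identity by induction on $n$, with the ``in particular'' clause falling out as the base case $n = 0$ (where the empty union is $\emptyset$ and the empty sum is $0$). The only tools needed are axiom~\Kcomp\ and the functoriality of $F$ on composites of embeddings; no separate naturality property of $v$ is required if the inductive step is set up correctly.

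For the base case, I would establish $v^X_\emptyset = 0$ directly. The pair of embeddings $\emptyset \incl X \otby{\id_X} X$ is complementary, since the images $\emptyset$ and $X$ are disjoint and cover $X$. Applying \Kcomp\ and using $F(\id_X) = \id_{F(X)}$ gives $v^X_\emptyset + v_X = v_X$, hence $v^X_\emptyset = 0$. The mildly non-obvious move here is to use $\id_X$ itself as the embedding complementary to $\emptyset \incl X$.

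For the inductive step, suppose the identity holds for $n$, and let $i_1, \dots, i_{n + 1}$ have pairwise disjoint images. Write $A = i_1 Y_1 \cup \cdots \cup i_n Y_n$ and $C = A \cup i_{n + 1} Y_{n + 1}$, each viewed as a measure space via the subspace structure, and let $a \from A \incl C$ and $c \from C \incl X$ be the inclusions. The key idea is to feed into \Kcomp\ the inclusion $a$ together with the \emph{corestriction} $i'_{n + 1} \from Y_{n + 1} \to C$ of $i_{n + 1}$ (an embedding with image $i_{n + 1} Y_{n + 1}$). These are complementary in $C$, so \Kcomp\ yields
\[
(Fa)(v_A) + (F i'_{n + 1})(v_{Y_{n + 1}}) = v_C .
\]
Applying $Fc$ to both sides and using $c \of a = (A \incl X)$ and $c \of i'_{n + 1} = i_{n + 1}$ collapses the left-hand side to $v^X_A + v^X_{Y_{n + 1}}$ and the right-hand side to $v^X_C$. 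Since $A = i_1 Y_1 \cup \cdots \cup i_n Y_n$, the inductive hypothesis identifies $v^X_A$ with $v^X_{Y_1} + \cdots + v^X_{Y_n}$, and $C = i_1 Y_1 \cup \cdots \cup i_{n + 1} Y_{n + 1}$, which completes the induction.

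The step I expect to require the most care is this inductive step, and specifically the decision to pair the inclusion $A \incl C$ with the corestriction of $i_{n + 1}$, rather than first trying to relate the embedding-based quantity $v^X_{Y_{n + 1}} = (F i_{n + 1})(v_{Y_{n + 1}})$ to a subset of $X$. With this choice, the only things left to verify are routine: that the corestriction is a genuine embedding into the subspace $C$, and that the two composites of embeddings coincide with $(A \incl X)$ and $i_{n + 1}$ respectively, so that functoriality of $F$ gives exactly the terms $v^X_A$ and $v^X_{Y_{n + 1}}$.
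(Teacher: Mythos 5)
Your proof is correct and follows essentially the same route as the paper's: induction on $n$, with axiom \Kcomp\ applied to the decomposition of the union of $n+1$ images into the first $n$ and the last, and functoriality of $F$ used to transport the identity into $F(X)$. The only cosmetic differences are that the paper derives $v_\emptyset = 0$ from the complementary pair $\emptyset \to \emptyset \ot \emptyset$ (rather than $\emptyset \incl X \otby{\id_X} X$) and proves the identity inside the union $Y$ first, applying $F$ of the inclusion $Y \incl X$ once at the end instead of at each inductive step.
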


\begin{proof}
Put $Y = i_1 Y_1 \cup \cdots \cup i_n Y_n \sub X$.  First we show that
\begin{equation}
\label{eq:F-add-basic}
v_Y = v^Y_{Y_1} + \cdots + v^Y_{Y_n}.
\end{equation}
Axiom~\Kcomp\ applied to the embeddings $\emptyset \to \emptyset \ot
\emptyset$ gives $v_\emptyset + v_\emptyset = v_\emptyset$ and so
$v_\emptyset = 0$, which is the case $n = 0$.  The general case follows by
induction, again using~\Kcomp.  This proves~\eqref{eq:F-add-basic}.  Now
the functoriality of $F \from \Memb \to \Vect$ gives
\[
v^X_Y 
=
\bigl( v^Y_{Y_1} + \cdots + v^Y_{Y_n} \bigr)^{X}
=
v^X_{Y_1} + \cdots + v^X_{Y_n},
\]
as required. The last part of the lemma is the case $n = 0$.
\end{proof}

\begin{lemma}
\label{lemma:v-pres}
Let $(F, v) \in \BVpar$. Let $s\from X \to Y$ be a measure-preserving map
and $B$ a measurable subset of $Y$. Then $(Fs)(v^Y_B) = v^X_{s^{-1}B}$.
\end{lemma}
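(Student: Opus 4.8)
The plan is to deduce the identity directly from the Beck--Chevalley condition (Lemma~\ref{lemma:bc}) together with axiom~\Kpres, with essentially no computation of my own. First I would instantiate Lemma~\ref{lemma:bc} for the given measure-preserving map $s\from X \to Y$ and measurable subset $B \sub Y$. Writing $i\from s^{-1}B \incl X$ and $j\from B \incl Y$ for the two inclusions and $s'\from s^{-1}B \to B$ for the restriction of $s$, the lemma gives the commuting square whose content is the equation
\[
(Fi)\of(Fs') = (Fs)\of(Fj) \from F(B) \to F(X).
\]

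The one point needing a moment's care is that $s'$ is a genuine measure-preserving map, so that axiom~\Kpres\ applies to it; this is implicit already in Lemma~\ref{lemma:bc} (which writes $Fs'$), and follows because for measurable $C \sub B$ one has $(s')^{-1}C = s^{-1}C$ and hence $\mu_{s^{-1}B}((s')^{-1}C) = \mu_X(s^{-1}C) = \mu_Y(C) = \mu_B(C)$, using that $s$ is measure-preserving and that $s^{-1}B$ and $B$ carry the subspace measures. Granting this, axiom~\Kpres\ applied to $s'$ yields $(Fs')(v_B) = v_{s^{-1}B}$.

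It then remains only to evaluate the Beck--Chevalley identity at the element $v_B \in F(B)$ and unwind the two sides using the definition of the notation $v^{(\blank)}_{(\blank)}$. The right-hand side is $(Fs)\bigl((Fj)(v_B)\bigr) = (Fs)(v^Y_B)$, since $v^Y_B = (Fj)(v_B)$ by definition. The left-hand side is $(Fi)\bigl((Fs')(v_B)\bigr) = (Fi)(v_{s^{-1}B}) = v^X_{s^{-1}B}$, using the previous step and then the definition of $v^X_{s^{-1}B}$. Equating the two sides gives $(Fs)(v^Y_B) = v^X_{s^{-1}B}$, as required. I do not anticipate any real obstacle here: the argument is a one-line diagram chase once Lemma~\ref{lemma:bc} is in place, and the only substantive input beyond that lemma is the single invocation of axiom~\Kpres\ on the restricted map $s'$.
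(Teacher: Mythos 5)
Your proof is correct and is essentially identical to the paper's: both evaluate the Beck--Chevalley square of Lemma~\ref{lemma:bc} at $v_B$ and apply axiom~\Kpres\ to the restriction $s'$. The only difference is that you explicitly verify $s'$ is measure-preserving, which the paper leaves implicit.
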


\begin{proof}
Define $i$, $j$ and $s'$ as in Lemma~\ref{lemma:bc}. We have 
\[
(Fs)(v^Y_B) = (Fs)(Fj)(v_B)
\]
by definition of $v^Y_B$, and 
\[
v^X_{s^{-1}B} = (Fi)(v_{s^{-1}B}) = (Fi)(Fs')(v_B)
\]
by definition of $v^X_{s^{-1}B}$ and axiom~\Kpres. The result follows from
Lemma~\ref{lemma:bc}.
\end{proof}

We now establish the universal properties of the simple functions. The
proof implicitly uses the fact that a function on a measure space is simple
if and only if its image is finite and each fibre is measurable.

\begin{propn}[Universal properties of spaces of simple functions]
\label{propn:BV}
\
\begin{enumerate}
\item
\label{part:BV-emb}
$(\Simp, I)$ is the initial object of $\BVemb$.

\item
\label{part:BV-par}
$(\Simp, I)$ is the initial object of $\BVpar$.
\end{enumerate}
\end{propn}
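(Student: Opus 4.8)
The plan is to prove initiality of $(\Simp, I)$ in each category by constructing, for every object $(F, v)$, a unique structure-preserving map $\psi \from (\Simp, I) \to (F, v)$. Since every simple function is a finite linear combination of indicator functions, and the structure axioms pin down where indicators must go, the map $\psi_X$ is forced on all of $\Simp_X$. The two parts share the bulk of their content: part \bref{part:BV-par} is part \bref{part:BV-emb} together with the additional bookkeeping needed to respect measure-preserving maps, so I would prove \bref{part:BV-emb} in full and then verify that the map constructed there is automatically natural with respect to measure-preserving maps, using axiom \Kpres\ and Lemma~\ref{lemma:v-pres}.

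First I would analyze what $\psi$ must do. A simple function $f \in \Simp_X$ can be written uniquely in the form $f = \sum_{r=1}^n c_r \chi_{A_r}$ for scalars $c_r$ and a measurable partition $X = A_1 \amalg \cdots \amalg A_n$ (the $A_r$ being the fibres of $f$). The indicator $\chi_{A_r}$ is exactly $I^X_{A_r} = (\Simp\, \iota_r)(I_{A_r})$ for the inclusion $\iota_r \from A_r \incl X$. Since $\psi$ must be linear and satisfy $\psi_{A_r}(I_{A_r}) = v_{A_r}$ together with naturality $\psi_X \of (\Simp\, \iota_r) = (F\iota_r) \of \psi_{A_r}$, we are forced to set
\[
\psi_X(f) = \sum_{r=1}^n c_r\, v^X_{A_r}.
\]
This proves uniqueness outright. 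For existence, I would \emph{define} $\psi_X$ by this formula and check it is well-defined (independent of the chosen representation) and linear; here Lemma~\ref{lemma:F-additivity} does the work, since refining a partition does not change the value of the sum, and any two representations have a common refinement.

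The remaining verifications are naturality and structure-preservation. Naturality with respect to an embedding $k \from Y \to X$ amounts to $(Fk)(v^Y_B) = v^X_{kB}$ for measurable $B \sub Y$, which is immediate from functoriality of $F$ (it is the identity $(v^Y_B)^X = v^X_B$ already used in Lemma~\ref{lemma:F-additivity}); combined with linearity this gives naturality of $\psi$ on all of $\Simp$. That $\psi_X(I_X) = v_X$ is the case $f = I_X$ of the defining formula, using $v^X_X = v_X$. This establishes part \bref{part:BV-emb}. For part \bref{part:BV-par}, the only new requirement is naturality with respect to measure-preserving maps $s \from X \to Y$, i.e.\ $(Fs)(\psi_Y(g)) = \psi_X(\Simp s\, (g))$ for $g \in \Simp_Y$. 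By linearity it suffices to check this on $g = \chi_B$, where it reads $(Fs)(v^Y_B) = v^X_{s^{-1}B}$ — precisely Lemma~\ref{lemma:v-pres}.

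\textbf{The main obstacle} is the well-definedness of the existence formula: a simple function admits many representations as a linear combination of indicators, and one must confirm that $\sum_r c_r v^X_{A_r}$ is insensitive to the choice. I expect this to reduce cleanly to Lemma~\ref{lemma:F-additivity}, which says that $v^X$ is finitely additive on disjoint unions of subobjects, so that passing to a common refinement of two partitions leaves the value unchanged. Everything else is the routine algebra of linear combinations of indicator functions, which I would not belabour.
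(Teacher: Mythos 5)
Your proposal is correct and follows essentially the same route as the paper: uniqueness by forcing $\psi_X$ on indicator functions via naturality and linearity, existence via the formula $\psi_X(f) = \sum c\, v^X_{f^{-1}(c)}$ with Lemma~\ref{lemma:F-additivity} handling the additivity/refinement bookkeeping, and part~\bref{part:BV-par} reduced to checking naturality on indicators via Lemma~\ref{lemma:v-pres}. The only cosmetic difference is that the paper defines $\psi_X$ directly by the canonical fibre decomposition (so well-definedness is automatic and the refinement argument is spent on proving additivity of $\psi_X$), whereas you define it from an arbitrary representation and spend the same refinement argument on well-definedness instead.
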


\begin{proof}
For~\bref{part:BV-emb}, let $(F, v) \in \BVemb$.  We show that there
exists a unique map $\BVmap\from (\Simp, I) \to (F, v)$ in $\BVemb$.

\paragraph*{Uniqueness}
Let $\BVmap \from (\Simp, I) \to (F, v)$ in $\BVemb$.  For each
measure space $X$ and measurable $Y \sub X$, the naturality of $\BVmap$
with respect to the inclusion $i\from Y \incl X$ gives a commutative square
\[
\xymatrix{
\Simp(Y) 
\ar@{^{(}->}[r]
\ar[d]_{\BVmap_Y}      &
\Simp(X)
\ar[d]^{\BVmap_X}      \\
F(Y)
\ar[r]_{Fi}     &
F(X).
}
\]
Evaluating the square at $I_Y \in \Simp(Y)$ gives 
\[
\BVmap_X(I^X_Y)
=
(Fi)(\BVmap_Y(I_Y))
=
(Fi)(v_Y)
=
v^X_Y.
\]
Hence $\BVmap_X$ is uniquely determined on indicator functions of
measurable subsets of $X$, and therefore, by linearity, on all of
$\Simp(X)$.

\paragraph*{Existence}
For each measure space $X$ and $f \in \Simp(X)$, define
\begin{equation}
\label{eq:defn-BVmap}
\BVmap_X(f)
=
\sum_{c \in \F} cv^X_{f^{-1}(c)} 
\in 
F(X).
\end{equation}
This sum has only finitely many nonzero summands, since if $c$ is not in
the image of $f$ then $v^X_{f^{-1}(c)} = v^X_\emptyset = 0$ by
Lemma~\ref{lemma:F-additivity}. 

To show that $\BVmap_X\from \Simp(X) \to F(X)$ is linear, let $f, g \in
\Simp(X)$. Then
\begin{equation}
\label{eq:lin-add}
\BVmap_X(f + g) 
=
\sum_c c v^X_{(f + g)^{-1}(c)}.
\end{equation}
But $(f + g)^{-1}(c) = \coprod_{a, b \colon a + b = c} f^{-1}(a) \cap
g^{-1}(b)$, so by Lemma~\ref{lemma:F-additivity},
\[
v^X_{(f + g)^{-1}(c)}
=
\sum_{a, b \colon a + b = c} v^X_{f^{-1}(a) \cap g^{-1}(b)}.
\]
Hence by~\eqref{eq:lin-add},
\begin{align}
\BVmap_X(f + g) &
=
\sum_a a \sum_b v^X_{f^{-1}(a) \cap g^{-1}(b)}     
+
\sum_b b \sum_a v^X_{f^{-1}(a) \cap g^{-1}(b)}     
\nonumber       \\
&
=
\sum_a a v^X_{f^{-1}(a)} + \sum_b b v^X_{g^{-1}(b)}     
\label{eq:lin-fibres}   \\
&
=
\BVmap_X(f) + \BVmap_X(g),
\nonumber
\end{align}
where~\eqref{eq:lin-fibres} is obtained by applying
Lemma~\ref{lemma:F-additivity} to the disjoint unions
\[
f^{-1}(a) = \coprod_b f^{-1}(a) \cap g^{-1}(b),
\qquad
g^{-1}(b) = \coprod_a f^{-1}(a) \cap g^{-1}(b).
\]
We have now shown that $\BVmap_X(f + g) = \BVmap_X(f) + \BVmap_X(g)$ for all
$f, g \in \Simp(X)$.  A similar argument shows that $\BVmap_X$ preserves
scalar multiplication.  Thus, $\BVmap_X$ is a linear map $\Simp(X) \to
F(X)$.

Next we show that $\BVmap_X$ is natural in $X \in \Memb$.  That is, given
an embedding $i\from Y \to X$, we show that the square
\[
\xymatrix@M+.5ex{
\Simp(Y) 
\ar@{^{(}->}[r]
\ar[d]_{\BVmap_Y} &
\Simp(X)
\ar[d]^{\BVmap_X}       \\
F(Y) 
\ar[r]_{Fi}     &
F(X)
}
\]
commutes.  By linearity, it suffices to check this on the indicator
function $I^Y_B$ of a measurable subset $B \sub Y$.  On the one hand,
$\BVmap_Y(I^Y_B) = v^Y_B$ by definition of $\BVmap_Y$, so
$(Fi)((\BVmap_Y(I^Y_B)) = (v^Y_B)^X = v^X_B$.  On the other, the extension
by zero of $I^Y_B$ to $X$ is $I^X_B$, and $\BVmap_X(I^X_B) = v^X_B$.  The
square therefore commutes.

We have now shown that $\BVmap = (\BVmap_X)$ defines a natural
transformation $\Simp \to F$.  Moreover, $\BVmap_X(I_X) = v_X$ for each
measure space $X$, by definition of $\BVmap_X$.  Hence $\BVmap$ is a map
$(\Simp, I) \to (F, v)$ in $\BVemb$, completing the proof
of~\bref{part:BV-emb}. 

To prove~\bref{part:BV-par}, let $(F, v) \in
\BVpar$. By~\bref{part:BV-emb}, there is a unique map $\BVmap \from (\Simp,
I) \to (F, v)$ in $\BVemb$, and it suffices to prove that $\BVmap$ is a map
in $\BVpar$. This reduces to showing that for every measure-preserving map
$s \from X \to Y$, the naturality square
\[
\xymatrix{
\Simp(Y)
\ar[r]^{\dashbk\of s}        
\ar[d]_{\BVmap_Y}       &
\Simp(X)
\ar[d]^{\BVmap_X}       \\
F(Y)    
\ar[r]_{Fs}     &
F(X)
}
\]
commutes. By linearity, it suffices to check this on the indicator
function $I^Y_B$ of a measurable subset $B \sub Y$.  On the one hand,
$\psi_X(I^Y_B \of s) = \psi_X(I^X_{s^{-1}B}) = v^X_{s^{-1}B}$. On the
other, $(Fs)(\psi_Y(I^Y_B)) = (Fs)(v^Y_B)$. Lemma~\ref{lemma:v-pres}
concludes the proof.
\end{proof}

This completes the first step towards the main theorem
(Table~\ref{table:B-results}). We now embark on the second.

Write $\NVS$ for the category of normed vector spaces and contractions. Let
$p \in [1, \infty]$. Let $\BNemb^p$ be the category of pairs $(F, v)$
consisting of a functor $F \from \Memb \to \NVS$ together with an element
$v_X \in F(X)$ for each measure space $X$, satisfying axiom~\Kcomp\ and the
following axioms:
\begin{itemize}
\item[\KKnorm] 
$\|v_X\| \leq \mu_X(X)^{1/p}$ for all measure spaces $X$;

\item[\KKineq]
$\|(Fi)(u) + (Fj)(w)\| \leq \bigl( \|u\|^p + \|w\|^p \bigr)^{1/p}$ for all
pairs $Y \toby{i} X \otby{j} Z$ of complementary embeddings,
$u \in F(Y)$, and $w \in F(Z)$.
\end{itemize}
The maps in $\BNemb^p$ are defined as in $\BVemb$.

When $p = 1$, axiom~\Kineq\ is redundant, since the maps in $\Ban$ are
contractions.  When $p = \infty$, the expression $\mu_X(X)^{1/p}$
in~\Knorm\ is interpreted as $0$ if $\mu_X(X) = 0$ and $1$ otherwise,
and the right-hand side of the inequality in~\Kineq\ as $\max\{ \|u\|,
\|w\| \}$.

Let $\BNpar^p$ be the category defined in the same way, but replacing
$\Memb$ by $\Mpar^\op$ and requiring that each object $(F, v)$ satisfies
axioms \Kcomp--\Kineq.

For example, an object $(S^p, I)$ of $\BNemb^p$ and an object $(S^p, I)$ of
$\BNpar^p$ may be defined as follows (abusively using the same name for
both).  For a measure space $X$, write $\simp^p(X)$ for the vector space of
equivalence classes of simple functions on $X$ under equality almost
everywhere, with the $L^p$ norm:
\begin{equation}
\label{eq:simple-norm}
\|f\|_p
=
\biggl( 
\sum_{c \in \F} |c|^p \mu_X\bigl(f^{-1}(c)\bigr)
\biggr)^{1/p}
\end{equation}
if $p < \infty$, and
\[
\|f\|_\infty
=
\max \bigl\{ 
|c| \such c \in \F, \ \mu_X\bigl(f^{-1}(c)\bigr) > 0 
\bigr\}.
\]
(All but finitely many arguments in the sum and max are $0$.)  This
construction defines functors
\[
\simp^p \from \Memb \to \NVS,
\qquad
\simp^p \from \Mpar^\op \to \NVS, 
\]
with the same functorial action as $\Simp$. They determine an object
$(\simp^p, I)$ of $\BNemb^p$ and an object $(\simp^p, I)$ of
$\BNpar^p$. Axioms \Kcomp--\Kineq\ are elementary properties of indicator
functions and the $p$-norm. Equality holds in the
inequalities~\Knorm\ and~\Kineq. Axiom~\Kineq\ is a consequence of the
fact that the $p$-norm of a function on a disjoint union $Y
\amalg Z$ is determined in the evident way by the $p$-norms of its
restrictions to $Y$ and $Z$.

The assumption that our measure spaces are finite has been used here
implicitly, since if $\mu_X(X)$ were infinite then the constant function
$I_X$ on $X$ would not lie in $L^p(X)$ for any $p \in [1, \infty)$.

\begin{lemma}
\label{lemma:BN-ineq}
Let $(F, v) \in \BNemb^p$. Then $\|v^X_Y\| \leq \mu_X(iY)^{1/p}$ for any
embedding $i \from Y \to X$ of measure spaces. In particular, $v^X_Y = 0$
whenever $Y$ is a measure-zero subspace of a measure space $X$.
\end{lemma}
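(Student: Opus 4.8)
The plan is to reduce the desired bound on $\|v^X_Y\|$ to axiom \Knorm\ by a single application of \Kineq. The key observation is that the embedding $i \from Y \to X$ has a complement inside $X$: writing $Y' = X \setminus iY$ for the complementary measurable subset (which is measurable since $iY$ is, $i$ being an embedding), with inclusion $j \from Y' \incl X$, the pair $Y \toby{i} X \otby{j} Y'$ is a pair of complementary embeddings. I would then feed this pair into \Kineq, but with the \emph{zero} vector in the second slot rather than $v_{Y'}$.

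Concretely, first I would apply \Kineq\ with $u = v_Y \in F(Y)$ and $w = 0 \in F(Y')$, obtaining
\[
\| (Fi)(v_Y) + (Fj)(0) \| \leq \bigl( \|v_Y\|^p + \|0\|^p \bigr)^{1/p} = \|v_Y\|.
\]
Since $Fj$ is a map in $\NVS$, hence linear, $(Fj)(0) = 0$, so the left-hand side is exactly $\|v^X_Y\|$; this gives $\|v^X_Y\| \leq \|v_Y\|$. Next I would invoke \Knorm\ for the space $Y$, namely $\|v_Y\| \leq \mu_Y(Y)^{1/p}$, and finally use the defining property of an embedding, $\mu_Y(Y) = \mu_X(iY)$, to conclude $\|v^X_Y\| \leq \mu_X(iY)^{1/p}$. (The degenerate case $iY = X$, where $Y' = \emptyset$, causes no trouble: $Y \toby{i} X \otby{j} \emptyset$ is still a complementary pair and $0 \in F(\emptyset)$.)

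For the case $p = \infty$ I would simply track the stated conventions: the right-hand side of \Kineq\ reads $\max\{\|v_Y\|, 0\} = \|v_Y\|$, and \Knorm\ gives $\|v_Y\| \leq 1$ (or $0$ if $\mu_Y(Y) = 0$), so the identical chain of inequalities holds. The ``in particular'' clause is then immediate: if $Y$ is a measure-zero subspace of $X$ then $\mu_X(iY) = 0$, so $\mu_X(iY)^{1/p} = 0$ under every convention, forcing $v^X_Y = 0$.

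There is essentially no serious obstacle here; the only point requiring a moment's thought is to apply \Kineq\ to the complementary pair with a \emph{zero} vector in the second argument, rather than to the pair $(v_Y, v_{Y'})$ that \Kcomp\ would naturally pair off. The latter choice, combined with \Kcomp, would only bound $\|v_X\|$, whereas we want to isolate the single term $\|v^X_Y\|$.
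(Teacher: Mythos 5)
Your argument is correct, but it takes a detour where the paper takes a one-line shortcut. The inequality you extract from \Kineq\ with a zero second argument, namely $\|v^X_Y\| = \|(Fi)(v_Y)\| \leq \|v_Y\|$, is already built into the setup for free: $\NVS$ is by definition the category of normed vector spaces and \emph{contractions}, so $Fi$ is a contraction and $\|(Fi)(v_Y)\| \leq \|v_Y\|$ holds for every map in the category, with no appeal to \Kineq\ or to the existence of a complementary embedding. The paper's proof is exactly this observation followed by \Knorm\ and $\mu_Y(Y) = \mu_X(iY)$, which is also how you finish. Your route is sound --- the complementary pair $Y \toby{i} X \otby{j} (X \setminus iY)$ exists, $(Fj)(0) = 0$ by linearity, and the $p = \infty$ conventions work out as you say --- but the step you single out as the ``only point requiring a moment's thought'' is precisely the step that the definition of $\NVS$ renders trivial. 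One small payoff of noticing this: the lemma then visibly depends only on \Knorm\ and the ambient category, not on \Kineq\ at all, which matters later when \Kineq\ is swapped for \KHilb\ in the Hilbert-space variant.
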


\begin{proof}
We have $\|v^X_Y\| = \|(Fi)(v_Y)\| \leq \|v_Y\|$ since $Fi$ is a contraction,
and $\|v_Y\| \leq \mu_Y(Y)^{1/p} = \mu_X(iY)^{1/p}$ by axiom~\Knorm.
\end{proof}

\begin{propn}[Universal properties of spaces of simple functions]
\label{propn:BN}
Let $1 \leq p \leq \infty$.
\begin{enumerate}
\item 
\label{part:BN-emb}
$(\simp^p, I)$ is the initial object of $\BNemb^p$.

\item
\label{part:BN-par}
$(\simp^p, I)$ is the initial object of $\BNpar^p$.
\end{enumerate}
\end{propn}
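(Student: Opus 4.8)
The plan is to bootstrap from Proposition~\ref{propn:BV}, whose content is a unique map of \emph{vector}-space-valued functors, and upgrade it to the normed setting by factoring through the a.e.-quotient and controlling norms. Let $q_X \from \Simp(X) \to \simp^p(X)$ denote the canonical surjection sending a simple function to its a.e.-equivalence class. Since the functorial actions of $\Simp$ and $\simp^p$ coincide (extension by zero and precomposition with measure-preserving maps both respect a.e.\ equality), $q$ is a natural transformation $\Simp \to \simp^p$, both over $\Memb$ and over $\Mpar^\op$.

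For part~\bref{part:BN-emb}, fix $(F, v) \in \BNemb^p$. Composing $F$ with the forgetful functor $\NVS \to \Vect$ yields an object of $\BVemb$ (axiom~\Kcomp\ is shared), so Proposition~\ref{propn:BV}\bref{part:BV-emb} supplies a unique map $\BVmap \from (\Simp, I) \to (F, v)$ in $\BVemb$, given by $\BVmap_X(f) = \sum_{c \in \F} c\, v^X_{f^{-1}(c)}$. The first task is to see that $\BVmap_X$ factors through $q_X$: if $f$ is a.e.\ zero then every fibre $f^{-1}(c)$ with $c \neq 0$ has measure zero, so $v^X_{f^{-1}(c)} = 0$ by Lemma~\ref{lemma:BN-ineq} and hence $\BVmap_X(f) = 0$; by linearity $\BVmap_X$ respects a.e.\ equality, giving a unique linear map $\BNmap_X \from \simp^p(X) \to F(X)$ with $\BVmap_X = \BNmap_X \of q_X$.

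The main obstacle is showing that each $\BNmap_X$ is a contraction, i.e.\ $\|\BVmap_X(f)\| \leq \|f\|_p$ for all $f \in \Simp(X)$; I would prove this by induction on the number $k$ of distinct nonzero values of $f$. When $k \leq 1$ we have $f = c\, I^X_Y$ for some measurable $Y \sub X$, so $\BVmap_X(f) = c\, v^X_Y$ and $\|c\, v^X_Y\| \leq |c|\,\mu_X(Y)^{1/p} = \|f\|_p$ by Lemma~\ref{lemma:BN-ineq}. For the inductive step, fix a nonzero value $c_1$, put $Y = f^{-1}(c_1)$ and $Z = X \setminus Y$, forming complementary embeddings $Y \toby{i} X \otby{j} Z$; since $f$ is the sum of the zero-extensions of its restrictions $f|_Y$ and $f|_Z$, linearity and naturality of $\BVmap$ with respect to $i$ and $j$ give
\[
\BVmap_X(f) = (Fi)\bigl(\BVmap_Y(f|_Y)\bigr) + (Fj)\bigl(\BVmap_Z(f|_Z)\bigr).
\]
Applying axiom~\Kineq, then the base case to $f|_Y$ and the inductive hypothesis to $f|_Z$, yields
\[
\|\BVmap_X(f)\|
\leq
\bigl( \|\BVmap_Y(f|_Y)\|^p + \|\BVmap_Z(f|_Z)\|^p \bigr)^{1/p}
\leq
\bigl( \|f|_Y\|_p^p + \|f|_Z\|_p^p \bigr)^{1/p}
=
\|f\|_p,
\]
completing the induction (the case $p = \infty$ runs identically with the stated max-interpretations of~\Knorm\ and~\Kineq). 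Thus $\BNmap$ is a map $(\simp^p, I) \to (F, v)$ in $\BNemb^p$: it is natural because $\BVmap$ is natural and $q$ is a natural epimorphism, and it preserves chosen points since $\BNmap_X(q_X I_X) = \BVmap_X(I_X) = v_X$. For uniqueness, any map $\chi$ of this form satisfies $\chi \of q = \BVmap$ in $\BVemb$ by the uniqueness clause of Proposition~\ref{propn:BV}\bref{part:BV-emb}, and surjectivity of each $q_X$ forces $\chi = \BNmap$.

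Part~\bref{part:BN-par} follows formally. Given $(F, v) \in \BNpar^p$, part~\bref{part:BN-emb} produces the map $\BNmap$ in $\BNemb^p$, and it remains only to check naturality with respect to measure-preserving maps. But $\BVmap = \BNmap \of q$ is already natural over all of $\Mpar^\op$ by Proposition~\ref{propn:BV}\bref{part:BV-par}, and surjectivity of $q$ transfers this naturality to $\BNmap$; hence $\BNmap$ is a map in $\BNpar^p$. Uniqueness in $\BNpar^p$ is inherited from $\BNemb^p$, since every map in $\BNpar^p$ is in particular a map in $\BNemb^p$.
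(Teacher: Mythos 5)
Your proposal is correct and follows essentially the same route as the paper: reduce to Proposition~\ref{propn:BV}, prove the bound $\|\BVmap_X(f)\| \leq \|f\|_p$ by induction using axiom~\Kineq\ together with Lemma~\ref{lemma:BN-ineq}, factor through the a.e.\ quotient to get a contraction $\BNmap_X$, and deduce part~\bref{part:BN-par} from Proposition~\ref{propn:BV}\bref{part:BV-par}. Your induction on the number of distinct nonzero values (peeling off one fibre via complementary embeddings) is just an explicit version of the iterated inequality the paper invokes, and your separate check that $\BVmap_X$ kills a.e.-zero functions is subsumed in the paper by the norm bound itself.
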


\begin{proof}
For~\bref{part:BN-emb}, let $(F, v) \in \BNemb^p$. We show that there is a
unique map $(\simp^p, I) \to (F, v)$ in $\BNemb^p$. Write $\BVmap$ for the
unique map $(\Simp, I) \to (F, v)$ in $\BVemb$.

\paragraph{Uniqueness}
Let $\BNmap$ be a map $(\simp^p, I) \to (F, v)$ in $\BNemb^p$. Applying the
forgetful functor $\BNemb^p \to \BVemb$ gives a map $(\simp^p, I) \to (F,
v)$ in $\BVemb$, which we also denote by $\BNmap$. Its composite with the
quotient map $(\Simp, I) \to (\simp^p, I)$ in $\BVemb$ can only be
$\BVmap$, so $\BNmap$ is uniquely determined.

\paragraph{Existence}
First we prove that $\|\BVmap_X(f)\| \leq \|f\|_p$ for each measure space
$X$ and $f \in \Simp(X)$. If $p < \infty$ then
\begin{align}
\|\BVmap_X(f)\| &
=
\biggl\| \sum_{c \in \F} c v^X_{f^{-1}(c)} \biggr\|
\label{eq:BN-map-def}   \\
&
\leq
\biggl( \sum_{c \in \F} 
\bigl\|c v^X_{f^{-1}(c)} \bigr\|^p 
\biggr)^{1/p}
\label{eq:BN-iter-ineq}       \\
&
\leq
\biggl(
\sum_{c \in \F} |c| \mu_X(f^{-1}(c))
\biggr)^{1/p}
\label{eq:BN-norm-ineq} \\
&
=
\|f\|_p,
\label{eq:BN-norm-def}
\end{align}
where~\eqref{eq:BN-map-def} follows from the formula~\eqref{eq:defn-BVmap}
for $\psi$, inequality \eqref{eq:BN-iter-ineq} follows by induction from
axiom~\Kineq, \eqref{eq:BN-norm-ineq} follows from
Lemma~\ref{lemma:BN-ineq}, and~\eqref{eq:BN-norm-def} follows
from~\eqref{eq:simple-norm}. The proof for $p = \infty$ is similar. Hence
$\|\BVmap_X(f)\| \leq \|f\|_p$, as claimed.

It follows that for each measure space $X$, there is a unique linear map
$\BNmap_X$ such that the triangle
\[
\xymatrix{
\Simp(X) 
\ar@{->>}[r]
\ar[dr]_{\BVmap_X}      &
S^p(X) 
\ar@{..>}[d]^{\BNmap_X} \\
&
F(X)
}
\]
commutes (where the horizontal arrow is the quotient map), and that
$\BNmap_X$ is a contraction. Moreover, $\BNmap_X$ is natural in $X$ because
$\BVmap_X$ is, and $\BNmap_X(I_X) = \BVmap_X(I_X) = v_X$. Hence $\BNmap$ is
a map $(S^p, I) \to (F, v)$ in $\BNemb^p$, completing the proof
of~\bref{part:BN-emb}.

Part~\bref{part:BN-par} follows from the fact that $\psi$ is natural with
respect to measure-preserving maps, by
Proposition~\ref{propn:BV}\bref{part:BV-par}.
\end{proof}

We now come to the main theorem. Let $\Bemb^p$ denote the full subcategory
of $\BNemb^p$ consisting of the pairs $(F, v)$ such that $F$ takes values
in Banach spaces, and similarly $\Bpar^p \sub \BNpar^p$. Equivalently,
$\Bemb^p$ and $\Bpar^p$ are defined in the same way as $\BNemb^p$ and
$\BNpar^p$ respectively, but replacing $\NVS$ by $\Ban$.

We have already defined the functor $L^p \from \Mpar^\op \to \Ban$, and we
also write $L^p$ for the restricted functor $\Memb \to \Ban$. These
functors define an object $(L^p, I)$ of $\Bemb^p$ and an object $(L^p, I)$
of $\Bpar^p$.

\begin{thm}[Universal properties of the $L^p$ functors]
\label{thm:Bp}
Let $1 \leq p \leq \infty$.
\begin{enumerate}
\item 
\label{part:B-emb}
$(L^p, I)$ is the initial object of $\Bemb^p$.

\item
\label{part:B-par}
$(L^p, I)$ is the initial object of $\Bpar^p$.
\end{enumerate}
\end{thm}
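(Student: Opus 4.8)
The plan is to build on Proposition~\ref{propn:BN} using the single analytic fact that, for every measure space $X$ and every $p \in [1, \infty]$, the space $L^p(X)$ is the completion of $\simp^p(X)$; that is, the simple functions form a dense subspace. Since $\Bemb^p$ and $\Bpar^p$ are full subcategories of $\BNemb^p$ and $\BNpar^p$, the maps in them are the same structure-preserving natural transformations, and the only difference is that the objects are now required to be Banach-space-valued. I would exploit this through the universal property of completion: for a normed space $N$ and a Banach space $B$, every contraction $N \to B$ extends uniquely to a contraction $\ovln{N} \to B$ along the dense isometric inclusion $N \incl \ovln{N}$. The inclusions $\simp^p(X) \incl L^p(X)$ assemble into a natural transformation preserving $I$, i.e.\ a map $(\simp^p, I) \to (L^p, I)$, and this is the map along which the universal property will be transferred from $(\simp^p, I)$ to $(L^p, I)$.

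For part~\bref{part:B-emb}, let $(F, v) \in \Bemb^p$; regarding it as an object of $\BNemb^p$, Proposition~\ref{propn:BN}\bref{part:BN-emb} gives a unique map $\BNmap \from (\simp^p, I) \to (F, v)$ in $\BNemb^p$. For existence, since each $F(X)$ is a Banach space, each contraction $\BNmap_X \from \simp^p(X) \to F(X)$ extends uniquely to a contraction $\ovln{\BNmap}_X \from L^p(X) \to F(X)$. I would then verify that $\ovln{\BNmap} = (\ovln{\BNmap}_X)$ is natural with respect to embeddings and satisfies $\ovln{\BNmap}_X(I_X) = v_X$: each naturality square and the structure equation already hold on the dense subspace $\simp^p(X)$ (using that $L^p(i)$ sends simple functions to simple functions and restricts there to the action of $\simp^p$, and that $\BNmap$ is natural and structure-preserving), so they hold on all of $L^p(X)$ by continuity. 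For uniqueness, any map $\theta \from (L^p, I) \to (F, v)$ in $\Bemb^p$ precomposes with $(\simp^p, I) \to (L^p, I)$ to give a map $(\simp^p, I) \to (F, v)$ in $\BNemb^p$, which by initiality of $(\simp^p, I)$ equals $\BNmap$; hence $\theta_X$ agrees with $\BNmap_X$ on the dense subspace $\simp^p(X)$, and continuity forces $\theta = \ovln{\BNmap}$.

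Part~\bref{part:B-par} follows the same template, starting instead from the unique map $\BNmap \from (\simp^p, I) \to (F, v)$ in $\BNpar^p$ furnished by Proposition~\ref{propn:BN}\bref{part:BN-par} and extending it by density to $\ovln{\BNmap}$. Beyond the checks of part~\bref{part:B-emb}, it remains only to confirm naturality of $\ovln{\BNmap}$ with respect to measure-preserving maps, which again holds on the dense subspaces $\simp^p(X)$ because $\BNmap$ is a map in $\BNpar^p$, and hence everywhere by continuity. The only genuinely analytic point is the density claim when $p = \infty$: in contrast to the Cantor-space situation of Remarks~\ref{rmks:cantor} and Proposition~\ref{propn:Ainfty}, where only dyadic step functions were available and their closure failed to fill out $L^\infty[0, 1]$, here \emph{every} simple function is allowed, and any essentially bounded measurable function is a uniform limit of simple functions obtained by partitioning its range. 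Thus $\simp^\infty(X)$ is dense in $L^\infty(X)$ and the completion argument runs uniformly for all $p \in [1, \infty]$. I expect this density check, rather than the categorical bookkeeping, to be the only real obstacle.
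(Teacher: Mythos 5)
Your proposal is correct and follows essentially the same route as the paper: reduce to Proposition~\ref{propn:BN} and invoke the single analytic fact that $L^p(X)$ is the completion of $\simp^p(X)$ for every $p \in [1,\infty]$. The paper merely packages your explicit extend-by-density argument more abstractly, observing that completion is left adjoint to the inclusion $\Ban \incl \NVS$, that this lifts to a left adjoint $\BNemb^p \to \Bemb^p$ (resp.\ $\BNpar^p \to \Bpar^p$), and that left adjoints preserve initial objects.
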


Theorem~B of the Introduction is the case $p = 1$ of~\bref{part:B-par}.

\begin{proof}
The inclusion $\Ban \incl \NVS$ has a left adjoint, the completion
functor $V \mapsto \ovln{V}$. Given $(F, v) \in \Bemb^p$, define $\ovln{F}
\from \Mpar \to \Ban$ by $\ovln{F}(X) = \ovln{F(X)}$, and regard $v_X \in
F(X)$ as an element of $\ovln{F}(X)$. It is routine to verify that
$(\ovln{F}, v) \in \Bemb^p$ and that the forgetful functor $\Bemb^p \to
\BNemb^p$ has left adjoint $(F, v) \mapsto (\ovln{F}, v)$.

Left adjoints preserve initial objects, so by Proposition~\ref{propn:BN},
the initial object of $\Bemb^p$ is $(\ovln{S^p}, I)$. But $L^p(X)$ is the
completion of $S^p(X)$ for each $X$~\cite[Proposition~6.7 and
Theorem~6.8]{FollRA}, so the initial object of $\Bemb^p$ is $(L^p, I)$.

This proves~\bref{part:B-emb}, and the same argument
proves~\bref{part:B-par}. 
\end{proof}

Theorem~\ref{thm:Bp} characterizes the spaces $L^p(X)$ uniquely up to
\emph{isometric} isomorphism, since the maps in $\Ban$ are contractions.

We now focus on the case $p = 1$.  As in the case of the interval, the
universal characterization of the integrable functions yields a unique
characterization of integration, as follows.

Write $\F \from \Memb \to \Ban$ for the functor that sends all measure
spaces to the ground field $\F \in \Ban$ and all maps in $\Memb$ to
$\id_{\F}$.  For each measure space $X$, put $t_X = \mu_X(X) \in \F$.  Then
$(\F, t) \in \Bemb^1$.

\begin{propn}[Uniqueness of integration]
\label{propn:B-unique-int}
The unique map $(L^1, I) \to (\F, t)$ in $\Bemb^1$ is the family of
operators
\[
\int_\dashbk
=
\biggl( \int_X \from L^1(X) \to \F \biggr)_{\textup{measure spaces } X}.
\]
\end{propn}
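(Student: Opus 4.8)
The plan is to exploit the initiality established in Theorem~\ref{thm:Bp}\bref{part:B-emb}, which already guarantees that there is exactly one map $(L^1, I) \to (\F, t)$ in $\Bemb^1$. Consequently I do not need to build this map by hand: it suffices to exhibit \emph{some} morphism $(L^1, I) \to (\F, t)$ in $\Bemb^1$ and then invoke uniqueness. So I would verify that the family $\int_\dashbk = (\int_X)_X$ is such a morphism, after which the uniqueness clause of Theorem~\ref{thm:Bp}\bref{part:B-emb} forces it to be the map whose existence that theorem asserts.

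Unwinding the definition of a map in $\Bemb^1$ produces three conditions to check. First, each component $\int_X \from L^1(X) \to \F$ must be a map in $\Ban$, that is, a contraction; this is immediate from $\bigl|\int_X f\bigr| \leq \int_X |f| = \|f\|$. Second, the family must preserve the distinguished elements, i.e.\ $\int_X(I_X) = t_X$ for every measure space $X$; this holds since $\int_X 1 = \mu_X(X) = t_X$. Third — and this is the only point with any content — the family must constitute a natural transformation $L^1 \to \F$ of functors $\Memb \to \Ban$.

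Naturality with respect to an embedding $i \from Y \to X$ amounts to the commutativity of the square whose top edge is $L^1 i \from L^1(Y) \to L^1(X)$, whose bottom edge is $\id_\F$, and whose vertical edges are $\int_Y$ and $\int_X$. Since $L^1 i$ is extension by zero, $L^1 i(f) = (f \of i^{-1})^X$, this is exactly the identity $\int_X (f \of i^{-1})^X = \int_Y f$ for $f \in L^1(Y)$. The only thing to invoke is the defining property of an embedding: $i$ carries $Y$ onto a measurable subset $iY \sub X$ with $\mu_X(iY) = \mu_Y(Y)$, and the extended function vanishes off $iY$, so $\int_X (f \of i^{-1})^X = \int_{iY} f \of i^{-1} = \int_Y f$ by measure-preservation. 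I expect no genuine obstacle here: once this single commuting square is checked, all three conditions hold, $\int_\dashbk$ is a morphism in $\Bemb^1$, and the categorical scaffolding supplied by Theorem~\ref{thm:Bp} has already done the real work. Each ingredient is an entirely elementary property of the Lebesgue integral.
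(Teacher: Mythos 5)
Your proposal is correct and follows exactly the paper's argument: invoke the initiality from Theorem~\ref{thm:Bp}\bref{part:B-emb}, then verify that $\int_\dashbk$ is a morphism in $\Bemb^1$ by checking contractivity via $\bigl|\int_X f\bigr| \leq \int_X |f|$, preservation of the distinguished elements via $\int_X I_X = \mu_X(X)$, and naturality with respect to embeddings via the extension-by-zero identity. The paper's proof consists of precisely these three elementary verifications, so there is nothing to add.
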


\begin{proof}
By Theorem~\ref{thm:Bp}\bref{part:B-emb}, it suffices to show that
$\int_\dashbk$ is indeed a map $(L^1, I) \to (\F, t)$ in $\Bemb^1$. This
reduces to the following standard properties of integration.  First,
whenever $X$ is a measure space, $\int_X$ is a map in $\Bemb^1$ by linearity
of integration and the triangle inequality
\[
\left| \int_X f \right|       
\leq 
\int_X |f|
\]
($f \in L^1(X)$).  Second, $\int_\dashbk$ defines a natural transformation
$L^1 \to \F$ because
\[
\int_X g^X = \int_Y g
\]
for any embedding of measure spaces $Y \to X$ and $g \in L^1(Y)$, where
$g^X$ denotes the extension by zero of $g$ to $X$.  Finally,
\[
\int_X I_X = \mu_X(X)
\]
for every measure space $X$.
\end{proof}

\begin{remark}
Here we have treated $(\F, t)$ as an object of $\Bemb^1$. But the constant
functor $\F \from \Mpar^\op \to \Ban$ together with the elements $t_X \in
\F$ also define an object $(\F, t)$ of $\Bpar^1$.
Theorem~\ref{thm:Bp}\bref{part:B-par} implies that the unique map $(L^p, I)
\to (\F, t)$ in $\Bemb^1$ is in fact a map in $\Bpar^1$. In concrete terms,
this statement is the formula for integration under a change of variables:
\[
\int_Y g = \int_X g \of s
\]
whenever $s \from X \to Y$ is a measure-preserving map and $g \in L^1(Y)$.
\end{remark}

Our next result relates the abstractly characterized spaces $L^1(X)$ to
actual spaces of functions. Of course, we cannot hope to evaluate an
element of $L^1(X)$ at a point of $X$, since it is only an equivalence
class of integrable functions. The best we can hope is to be able to
integrate it over any measurable subset of $X$. That is, we would like to
construct for each $f \in L^1(X)$ the signed or complex measure $f\mu_X =
f\dee\mu_X$ defined by $(f \mu_X)(A) = \int_A f \dee\mu_X$. We now show
that this construction arises naturally from the universal property of
$L^1$.

Write $M(X)$ for the Banach space of finite signed measures (if $\F = \R$)
or complex measures (if $\F = \C$) on the underlying $\sigma$-algebra of a
measure space $X$, with the total variation norm $\nu \mapsto
|\nu|(X)$. Any embedding $i \from Y \to X$ induces an isometry $M(Y) \to
M(X)$ that extends measures by zero; thus, $M$ defines a functor $\Memb \to
\Ban$.  Together with the elements $\mu_X \in M(X)$, it gives an object
$(M, \mu)$ of $\Bemb^1$.

\begin{propn}
\label{propn:B-action}
The unique map $(L^1, I) \to (M, \mu)$ in $\Bemb^1$ has $X$-component
\[
\begin{array}{ccc}
L^1(X)  &\to            &M(X)   \\
f       &\mapsto        &f\mu_X
\end{array}
\]
for each measure space $X$.
\end{propn}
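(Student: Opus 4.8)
The plan is to invoke the uniqueness clause of Theorem~\ref{thm:Bp}\bref{part:B-emb}, exactly as in Propositions~\ref{propn:A-integration} and~\ref{propn:B-unique-int}. Since $(L^1, I)$ is the initial object of $\Bemb^1$, there is precisely one map $(L^1, I) \to (M, \mu)$ in $\Bemb^1$; hence it suffices to show that the family $\Phi = (\Phi_X)$ defined by $\Phi_X(f) = f\mu_X$ is such a map. This requires three verifications: that each $\Phi_X \from L^1(X) \to M(X)$ is a morphism in $\Ban$, that $\Phi$ preserves the chosen points, and that $\Phi$ is natural with respect to embeddings. (Because $(M, \mu)$ is being treated as an object of $\Bemb^1$, only naturality over $\Memb$ is at issue; the object-level axioms \Kcomp, \Knorm, \Kineq\ are already known to hold for both $(L^1, I)$ and $(M, \mu)$.)

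The first two points are immediate from the definitions. The assignment $f \mapsto f\mu_X$ is well defined on a.e.\ equivalence classes and is visibly linear, and it is a contraction --- in fact an isometry --- because the total variation norm of $f\mu_X$ is $|f\mu_X|(X) = \int_X |f| \dee\mu_X = \|f\|_1$. Preservation of the chosen points is the observation that $(I_X\mu_X)(A) = \int_A 1 \dee\mu_X = \mu_X(A)$, so that $\Phi_X(I_X) = \mu_X$.

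The only step involving any computation, and the one I would treat as the crux, is naturality with respect to embeddings; even so it is entirely elementary. Given an embedding $i \from Y \to X$, the induced maps are $L^1 i \from L^1(Y) \to L^1(X)$, which extends a function by zero, and $Mi \from M(Y) \to M(X)$, which extends a measure by zero. For $g \in L^1(Y)$ I would evaluate both composites of the naturality square at $g$, and test the resulting measures on an arbitrary measurable $A \sub X$. The composite $\Phi_X \of L^1 i$ gives $(g^X \mu_X)(A) = \int_A g^X \dee\mu_X = \int_{A \cap iY} g \dee\mu$, since $g^X$ vanishes off $iY$, while $Mi \of \Phi_Y$ gives the extension by zero of $g\mu_Y$, namely $(g\mu_Y)(i^{-1}(A \cap iY)) = \int_{A \cap iY} g \dee\mu$. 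As these agree for every such $A$, the square commutes. With all three conditions checked, $\Phi$ is a map in $\Bemb^1$, and by initiality it is the unique one, as claimed.
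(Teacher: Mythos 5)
Your proposal is correct and follows essentially the same route as the paper: both invoke initiality of $(L^1, I)$ in $\Bemb^1$ and then verify that $f \mapsto f\mu_X$ is an isometric, point-preserving, $\Memb$-natural transformation, with the naturality square reduced to the identity $\int_A g^X \dee\mu_X = \int_{A \cap iY} g \dee\mu_Y$ for measurable $A \sub X$. No gaps.
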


\begin{proof}
For each $X$, define $\theta_X\from L^1(X) \to M(X)$ by $\theta_X(f) =
f\mu_X$.  By Theorem~\ref{thm:Bp}\bref{part:B-emb}, it suffices to show that
$\theta$ defines a map $(L^1, I) \to (M, \mu)$ in $\Bemb^1$.  It is
elementary that $\theta_X$ is an isometry and that $\theta_X(I_X) =
\mu_X$, for each $X$.  So it only remains to prove that $\theta_X$ is
natural in $X \in \Memb$.

Let $i\from Y \to X$ be an embedding.  We must show that the square
\[
\xymatrix{
L^1(Y) 
\ar@{^{(}->}[r] 
\ar[d]_{\theta_Y}       &
L^1(X)
\ar[d]^{\theta_X}       \\
M(Y)
\ar@{^{(}->}[r]         &
M(X)
}
\]
commutes, where both horizontal maps are extension by zero. Equivalently,
writing $\blank^X$ for the extension by zero to $X$ of a function or
measure on $Y$, we must show that $g^X \mu_X = (g\mu_Y)^X$ for all $g \in
L^1(Y)$. But this just states that
\[
\int_A g^X \dee\mu_X = \int_{A \cap Y} g \dee\mu_Y
\]
for all measurable $A \sub X$, which is true.
\end{proof}

There is a similar theorem in which $M(X)$ is replaced by the subspace
$\AC(X)$ of signed or complex measures absolutely continuous with respect
to $\mu_X$; the unique map $(L^1, I) \to (\AC, \mu)$ in $\Bemb^1$ is $f
\mapsto f\mu_X$.  This map is an isomorphism (the Radon--Nikodym theorem),
but that does not seem to be an easy consequence of the universal property
of $(L^1, I)$.

Finally, consider the case $p = 2$. Theorem~\ref{thm:Bp} characterizes
$(L^2, I)$ as the initial object of $\Bpar^2$, but there is an alternative
characterization. Write $\Hilb$ for the category of Hilbert spaces and
linear contractions. Let $\cat{H}$ be the category of pairs $(F, v)$
consisting of a functor $F \from \Mpar^\op \to \Hilb$ and an element $v_X
\in F(X)$ for each measure space $X$, subject to axioms
\Kcomp--\Knorm\ (with $p = 2$ in \Knorm) and:
\begin{itemize}
\item[\KKHilb]
$\ip{(Fi)(v_Y)}{(Fj)(v_Z)} = 0$ whenever $Y \toby{i} X \otby{j} Z$ are
embeddings with disjoint images.
\end{itemize}
Thus, the difference between the categories $\Bpar^2$ and $\cat{H}$ is that
$\Ban$ has been replaced by $\Hilb$ and~\Kineq\ by~\KHilb.

The functor $L^2 \from \Mpar^\op \to \Hilb$, together with the constant
functions $I_X \in L^2(X)$, defines an object $(L^2, I)$ of
$\cat{H}$. It is universal as such:

\begin{propn}[Universal property of the $L^2$ functor]
\label{propn:Hilb}
$(L^2, I)$ is the initial object of $\cat{H}$.
\end{propn}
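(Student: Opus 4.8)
The plan is to run, in the Hilbert setting, the same two-step argument that proves Proposition~\ref{propn:BN}\bref{part:BN-par} and Theorem~\ref{thm:Bp}\bref{part:B-par}, with the subadditivity axiom \Kineq\ replaced throughout by the orthogonality axiom \KHilb. A map $(L^2, I) \to (F, v)$ in $\cat{H}$ is a natural transformation $\Bmap \from L^2 \to F$ with contraction components satisfying $\Bmap_X(I_X) = v_X$, so for each $(F, v) \in \cat{H}$ I must produce exactly one such $\Bmap$.

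First I would forget the inner products and view $(F, v)$ as an object of $\BVpar$, using that \Kcomp\ and \Kpres\ are among the axioms defining $\cat{H}$. Proposition~\ref{propn:BV}\bref{part:BV-par} then supplies a unique map $\BVmap \from (\Simp, I) \to (F, v)$ in $\BVpar$, with $\BVmap_X(f) = \sum_c c\, v^X_{f^{-1}(c)}$ on a simple function $f$. Since any morphism in $\cat{H}$ restricts, after forgetting structure, to a map $(\Simp, I) \to (F, v)$ in $\BVpar$, it must agree with $\BVmap$ on simple functions; as $\simp^2(X)$ is dense in $L^2(X)$ and the components are continuous, this already forces \emph{uniqueness}.

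The crux --- and the only point at which \KHilb\ does real work --- is the estimate $\|\BVmap_X(f)\| \leq \|f\|_2$. The fibres $f^{-1}(c)$ of a simple function are pairwise disjoint, so for $c \neq c'$ the embeddings $f^{-1}(c) \to X \ot f^{-1}(c')$ have disjoint images and \KHilb\ gives $\ip{v^X_{f^{-1}(c)}}{v^X_{f^{-1}(c')}} = 0$. Thus the summands of $\BVmap_X(f)$ are pairwise orthogonal, and Pythagoras together with the bound $\|v^X_A\| \leq \mu_X(A)^{1/2}$ (the proof of Lemma~\ref{lemma:BN-ineq} uses only contractivity and \Knorm, never \Kineq) yields
\[
\|\BVmap_X(f)\|^2
=
\sum_c |c|^2\, \|v^X_{f^{-1}(c)}\|^2
\leq
\sum_c |c|^2\, \mu_X\bigl(f^{-1}(c)\bigr)
=
\|f\|_2^2 .
\]
I expect this to be the main obstacle, not because it is long but because \KHilb\ constrains only the distinguished vectors $v^X_Y$: consequently $(F, v)$ need not satisfy \Kineq, and $\cat{H}$ is \emph{not} a subcategory of $\Bpar^2$, so the estimate cannot simply be imported. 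It survives only because the vectors occurring in $\BVmap_X(f)$ are themselves distinguished vectors indexed by the disjoint fibres, exactly where \KHilb\ applies.

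Granted the estimate, the remainder duplicates the earlier proofs. The estimate factors $\BVmap_X$ through the quotient $\Simp(X) \to \simp^2(X)$ as a contraction $\BNmap_X$, natural in $X$ because $\BVmap_X$ is, giving a map $(\simp^2, I) \to (F, v)$ --- the Hilbert analogue of Proposition~\ref{propn:BN}. Finally, since each $F(X)$ is complete and $\simp^2(X)$ is dense in its completion $L^2(X)$, every $\BNmap_X$ extends uniquely to a contraction $\Bmap_X \from L^2(X) \to F(X)$, with naturality and $\Bmap_X(I_X) = v_X$ surviving by continuity; this is the completion step of Theorem~\ref{thm:Bp}. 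The transformation $\Bmap$ so obtained is the required map, establishing existence and finishing the proof.
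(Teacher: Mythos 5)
Your proposal is correct and follows essentially the same route as the paper: the paper likewise observes that axiom \Kineq\ enters the chain of proofs only through the estimate $\|\BVmap_X(f)\| \leq \|f\|_2$ on simple functions, and recovers that estimate (in fact with equality) from \KHilb\ via orthogonality of the vectors $v^X_{f^{-1}(c)}$ over the disjoint fibres. Your explicit remark that $\cat{H}$ is not a subcategory of $\Bpar^2$, so the estimate must be re-derived rather than imported, is exactly the point the paper's shorter proof is making implicitly.
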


\begin{proof}
In the proof of Theorem~\ref{thm:Bp}, the only point where
axiom~\Kineq\ was used was to prove the inequality~\eqref{eq:BN-iter-ineq},
which in the case $p = 2$ states that
\begin{equation}
\label{eq:Hilb-proxy}
\biggl\| \sum_{c \in \F} c v^X_{f^{-1}(c)} \biggr\|^2
\leq
\sum_{c \in \F} 
\bigl\|c v^X_{f^{-1}(c)} \bigr\|^2
\end{equation}
for any simple function $f$ on a measure space $X$ and any object $(F, v) \in
\Bpar^2$. So, it suffices to prove that~\eqref{eq:Hilb-proxy} also holds
for any $(F, v) \in \cat{H}$. Indeed, recall that if we write $i_c$ for the
inclusion $f^{-1}(c) \incl X$ then by definition, $v^X_{f^{-1}(c)} =
(Fi_c)(v_{f^{-1}(c)})$. Axiom~\KHilb\ therefore implies that the elements
$v^X_{f^{-1}(c)}$ of $F(X)$ are orthogonal for distinct $c$, giving
equality in~\eqref{eq:Hilb-proxy}.
\end{proof}

\paragraph{Acknowledgements} I thank Mark Meckes for allowing me to include
Proposition~\ref{propn:mm}, and Ruijun Lin for helpful comments and
corrections. This work was supported at different times by an EPSRC
Advanced Research Fellowship and a Leverhulme Trust Research Fellowship.

\bibliography{mathrefs}

\end{document}